\newcommand{\beq}{\begin{equation}}  
\newcommand{\eeq}{\end{equation}}  
\newcommand{\bea}{\begin{eqnarray}}  
\newcommand{\eea}{\end{eqnarray}}
\def\ri{\mathrm{i}}
\newcommand{\realpart}[1]{\operatorname{\sf Re}\!\left(#1\right)}
\newcommand{\eq}[2]{\begin{equation}\begin{split}#1\end{split}\label{#2}\end{equation}}
\newcommand{\eqnn}[1]{\begin{equation}\begin{split}#1\end{split}\nonumber\end{equation}}
\newtheorem{theorem}{Theorem}
\numberwithin{theorem}{section}
\newtheorem{lemma}[theorem]{Lemma}
\newtheorem{proposition}[theorem]{Proposition}
\theoremstyle{definition}
\newtheorem{remark}[theorem]{Remark}
\begin{document}
\title{The Stability of the $b$-family of Peakon Equations}
\author[1]{Efstathios G. Charalampidis   %
\thanks{echarala@calpoly.edu}}
\affil[1]{Mathematics Department,        %
California Polytechnic State University, %
San Luis Obispo,
CA 93407-0403, USA}

\author[2]{Ross Parker %
\thanks{rhparker@smu.edu}}
\affil[2]{Department of Mathematics, %
Southern Methodist University, Dallas, %
TX 75275, USA}

\author[3]{Panayotis G. Kevrekidis%
\thanks{kevrekid@math.umass.edu}}
\affil[3]{Department of Mathematics %
and Statistics, University of Massachusetts Amherst, %
Amherst, MA 01003-9305, USA}

\author[4]{St\'ephane Lafortune%
\thanks{lafortunes@cofc.edu}}
\affil[4]{Department of Mathematics, %
College of Charleston, Charleston, %
SC 29401, USA}

\maketitle

\begin{abstract} 
In the present work we revisit the $b$-family model of peakon equations,
containing as special cases the $b=2$ (Camassa-Holm) and $b=3$ (Degasperis-Procesi) 
integrable examples. We establish information about the point spectrum 
of the peakon solutions and notably find that for suitably smooth perturbations 
there exists point spectrum in the right half plane rendering the peakons unstable
for $b<1$. We explore numerically these ideas in the realm of fixed-point 
iterations, spectral stability analysis and time-stepping of the model for 
the different parameter regimes. In particular, we identify exact, stationary 
(spectrally stable) lefton solutions for $b<-1$, and for $-1<b<1$, we dynamically 
identify ramp-cliff solutions as dominant states in this regime. We complement 
our analysis by examining the breakup of smooth initial data into stable peakons 
for $b>1$. While many of the above dynamical features had been explored in earlier 
studies, in the present work, we supplement them, wherever possible, with spectral 
stability computations.
\end{abstract} 

\section{Introduction} 
\label{intro}
The family of partial differential equations 
\beq \label{bfamily} 
u_t - u_{xxt} +(b+1)uu_x=bu_x u_{xx} + u u_{xxx}, 
\eeq 
labeled by the parameter $b$, is distinguished by the fact that it includes 
two completely integrable equations, namely the Camassa-Holm equation (the 
case $b=2$~\cite{ch, ch2}), and the Degasperis-Procesi equation (the case 
$b=3$~\cite{dp, dhh}). Each of the two integrable cases has a Lax
pair (and is, thus, solvable via the inverse scattering transform), 
possesses multi-soliton solutions, and a bi-Hamiltonian structure%
~\cite{Ma05,honejpa,ch2,ff}. Furthermore, the cases $b=2,3$ have been singled 
out by various tests of integrability: The Wahlquist-Estabrook prolongation 
method, the Painlev\'e analysis, symmetry conditions, and a test for asymptotic 
integrability~\cite{dp, wanghone, miknov, honep}. 

The Camassa-Holm equation was originally proposed as a model for shallow 
water waves~\cite{ch, ch2}. The results of~\cite{constantin,rossen} (see 
Proposition 2 of~\cite{constantin} and Equation (3.8) of~\cite{rossen}) 
show that, in a model of shallow water, the solution $u$ of Eq.~\eqref{bfamily} 
corresponds to the horizontal component of velocity evaluated at some 
specific level in the cases $b\geq  10/11$ or $b\leq -10$. However, there 
is some debate about the precise range of validity of such models \cite{bm}.

What makes the $b$-family particularly interesting to study from a
mathematical physics viewpoint is that it shares the one-peakon solutions   
\beq \label{peakon}
u=u_0=c\exp (-|x-ct|).
\eeq
that are admitted by the Camassa-Holm, and the Degasperis-Procesi equations.
Indeed, the peakons can solve the following weak formulation of 
Eq.~\eqref{bfamily}
\begin{equation} \label{bfamilyi} \vspace{-.2cm}
u_t=\frac{1}{2}\left(\phi\ast \left[\frac{b-3}{2}u_x^2-%
\frac{b}{2}u^2\right]-u^2\right)_x,\;\;\phi=e^{-|x|},
\end{equation}
where $\ast$ denotes convolution; the fact that $\phi/2$ is a Green's 
function for the operator $1-\partial_x^2$ was used in the reformulation. 
In effect, Eq.~\eqref{bfamilyi} is obtained from Eq.~\eqref{bfamily} 
by factoring out the operator $1-\partial_x^2$.

Moreover, the whole $b$-family possesses $N$-peakon solutions given by  
\beq \label{multipeakon} 
u(x,t) = \sum_{j=1}^N p_j(t)e^{-|x-q_j(t)|}, 
\eeq 
where  the positions $q_j$ and amplitudes $p_j$ are the canonically 
conjugate coordinates and momenta in a finite-dimensional Hamiltonian 
system. In the cases $b=2,3$ this Hamiltonian system is completely 
integrable in the Liouville-Arnold sense~\cite{ch,ch2,dhh}. In the 
general case, the Hamiltonian system does not appear to be integrable%
~\cite{hh}. Recently, the $b$-family was generalized to an equation 
containing two free functions with the property that it also admits 
multi-peakon solutions written as a linear combination of one-peakons~\cite{Anco2019}.  

Another interesting aspect of the $b$-family in the cases $b=2,3$ is 
that they admit smooth multi-soliton solutions on a nonzero background~\cite{Ma05,ch,ch2}. 
In the limit where the background goes to zero, the $N$-soliton solutions 
become the $N$-peakons solution as given in Eq.~\eqref{multipeakon}. For 
general $b$, smooth one-solitons on nonzero background are known to exist~\cite{Guo05}. 
 
The work of ~\cite{Holm03a, Holm03} presented a  numerical study of the
solutions of Eq.~\eqref{bfamily} for different values of $b$. They observed 
that there are three distinct parameter regimes separated by bifurcations 
at $b=1$ and $b=-1$, as follows:

\begin{itemize}
\item {\bf Peakon regime}: For $b>1$, arbitrary initial data asymptotically 
separates out into a number of peakons as $t\to\infty$. 
\item {\bf Ramp-cliff regime}: For $-1<b<1$, solutions behave asymptotically 
like a combination of a ``ramp''-like solution of Burgers equation (proportional
to $x/t$), together with an exponentially-decaying tail (``cliff'').
\item {\bf Lefton regime}: For $b<-1$, arbitrary initial data moves to the 
left and asymptotically separates out into a number of ``leftons'' as 
$t\to\infty$, which are smooth, exponentially localized,
stationary solitary waves. 
\end{itemize}

The behavior observed separately in each of the parameter ranges $b>1$ 
and $b<-1$ can be understood as particular instances of the {\it soliton 
resolution conjecture}~\cite{tao}, a somewhat loosely defined conjecture which states
that for suitable dispersive wave equations, solutions with ``generic'' 
initial data will decompose into a finite number of solitary waves plus a 
radiation part which disperses away. The authors of~\cite{Hone14} provide 
a first step towards explaining this phenomenon analytically in the ``lefton'' 
regime $b<-1$. Indeed, they show that in this parameter range a single 
lefton solution is orbitally stable, by applying the approach of Grillakis, 
Shatah and Strauss in \cite{Grillakis87}. The main ingredients required for
the stability analysis are the Hamiltonian structure and conservation laws 
for Eq.~\eqref{bfamily}. The $b$-family is known to admit a Hamiltonian 
structure and two additional conservation laws~\cite{dhh2}. The lefton 
solutions are a critical point for a functional which is combination of the 
Hamiltonian and a conserved functional.    

In this article, our goal is to study the spectral stability of the peakon 
solutions [cf. Eq.~\eqref{peakon}]. In particular, we are interested in the 
observation made numerically by Holm and Staley in \cite{Holm03a, Holm03} 
that the peakon solutions  become unstable when $b<1$. To do so, in Section 2, 
we state the main analytical results concerning the spectrum associated to 
the eigenvalue problem arising from the linearization of Eq.~\eqref{bfamilyi} 
about the peakon solutions. These analytical results are proven in Section 3. 
The numerical results on the $b$-family [cf. Eq.~\eqref{bfamily}] are presented 
in Section 4. We explore both statically as appropriate, as well as dynamically, 
each of the classes of solutions therein. We examine their existence over parametric 
variations of $b$, when possible/relevant (e.g. for the leftons) we consider their 
stability and we also explore their dynamics (especially for the ramp-cliff waveforms 
for which we cannot identify a reference frame in which they appear as steady). In 
Section 5, we state our conclusions and present directions for future study.

\section{Main Results} 
\label{main_res}
The spectral stability of the peakon solution is explored by first considering 
Eq.~\eqref{bfamilyi} in the co-traveling frame $\xi=x-ct$:
\beq \notag
u_t-cu_\xi=\frac{1}{2}\left(\phi\ast \left[\frac{b-3}{2}u_\xi^2-\frac{b}{2}u^2\right]%
-u^2\right)_\xi,\;\;\phi=e^{-|\xi|}.
\eeq 
Now consider a small perturbation of the peakon $u=u_0$ of the form 
\beq \notag
w(\xi,t)=v(\xi)e^{\lambda t},
\eeq
where $v(\xi)$ stands for the eigenvector associated with the 
eigenvalue $\lambda$. Then, we substitute $u=u_0+w$ into Eq.~\eqref{bfamilyi} 
and linearize by keeping only the first-order terms in $v$. This way, 
we obtain the following eigenvalue problem associated to an integral 
operator ${\mathcal{L}}$
\beq 
\label{eig}
\lambda v={\mathcal{L}}v\equiv \left(\phi\ast \left[\frac{b-3}{2}u_{0}'v'%
-\frac{b}{2}u_0v\right]+(c-u_0)v\right)',\;\;\phi=e^{-|\xi|}.
\eeq
For our analytical study, we are interested in the spectrum of ${\mathcal{L}}$ 
defined above. 

To define an appropriate domain for the operator ${\mathcal{L}}$, we need to 
consider well-posedness of the $b$-family [cf. Eq.~\eqref{bfamilyi}]. The $b$-family 
is known to be well-posed for initial conditions in $H^s(\mathbb{R})$, $s>3/2$~\cite{Gui08,Zhou10,Liu06,Escher08,katelyn,Olver00,Rod93}. 
The peakons in the Camassa-Holm ($b=2$) are proven to be stable in $H^1(\mathbb{R})$%
~\cite{cstrauss} while the ones in the Degasperis-Procesi ($b=3$) in $L^2(\mathbb{R})$~\cite{lin}. 
However, due to the discussion above about well-posedness, the authors of~\cite{cstrauss}
and~\cite{lin} state that their stability results only apply to initial condition that 
are in the subsets $H^s(\mathbb{R})$, $s>3/2$, of $H^1(\mathbb{R})$ (for Camassa-Holm) 
or $L^2(\mathbb{R})$ (for Degasperis-Procesi).

We are thus interested in the orbital stability of the peakon solutions~\eqref{peakon} 
with respect to initial conditions of the form 
\begin{equation}
u(\xi,t=0)=u_0(\xi+\epsilon)+p_0(\xi) \in H^s(\mathbb{R}), s>3/2,
\nonumber
\end{equation}
where $\epsilon$ is introduced to take into account a drift along the translation 
invariance symmetry direction. A necessary condition for the initial conditions 
above to be in $H^s(\mathbb{R})$, for some $s>3/2$ is that $p_0$ be in $H^s(\mathbb{R})$, 
for all $s<3/2$, since $u_0$ itself is in $H^s(\mathbb{R})$, for all $s<3/2$. Thus, 
at the linear level, we will look for eigenvectors of the form
\beq
\label{disc}
v=u_0'(\xi)+p_1(\xi),
\eeq
for $p_1\in H^s(\mathbb{R})$, for all $s<3/2$. However, $v=u_0'$ is discontinuous 
and thus not in the domain of ${\mathcal{L}}$ as defined in Eq.~\eqref{eig} 
due to the term $u_0'v'$. In the next section (see Eq.~\eqref{gen}), we 
will define an extension ${\mathcal{L}}_w$ of the operator ${\mathcal{L}}$ 
given in Eq.~\eqref{eig} that admits discontinuous functions in its domain. 
For ${\mathcal{L}}_w$, we will be interested in eigenfunctions in the set 
\beq
\label{Dom}
A=\left\{u_0'(\xi)+p_1(\xi)\,\bigg|\, p_1\in H^s(\mathbb{R}),{\mbox{ for all }}s<3/2\right\}.
\eeq
The extension ${\mathcal{L}}_w$ is not a closed operator on $L^2(\mathbb{R})$ and 
thus its resolvent set is automatically empty (see for example~\cite{Schmudgen12}). 
However, we show that ${\mathcal{L}}_w$ is closed on the Banach space $L^2(\mathbb{R})\cap C_{\rm{d}}(\mathbb{R})$ 
(see Lemma~\ref{closed}), where $C_{\rm{d}}(\mathbb{R})$ is the set of bounded functions 
that are continuous except at the origin, where the functions are allowed to have a 
finite jump discontinuity (see Eq.~\eqref{Cd}). In Section \ref{SP}, we prove the 
following theorem about the point spectrum of ${\mathcal{L}}_w$:
\begin{theorem}
\label{im}
The linear operator $\mathcal{L}_w$ defined in Eq.~\eqref{gen} 
is closed on $L^2(\mathbb{R})\cap C_{\rm{d}}(\mathbb{R})$ and its point spectrum 
consists of the origin $\lambda=0$ and, if $b<2$, of the two bands defined by 
$0< |\realpart{\lambda}|<  c(2-b)$. If we restrict the eigenfunction to the set 
$A$ defined in Eq.~\eqref{Dom}, the band of point spectrum is reduced to 
$0< |\realpart{\lambda}|\leq c(1-b)$ when $b<1$.
\end{theorem}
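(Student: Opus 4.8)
The first assertion, that $\mathcal{L}_w$ is closed on $L^2(\mathbb{R})\cap C_{\rm{d}}(\mathbb{R})$, is the content of Lemma~\ref{closed}; the substance of the theorem is the description of the point spectrum. I would first dispose of $\lambda=0$: by translation invariance $v=u_0'$ solves $\mathcal{L}_w v=0$, and $u_0'=u_0'+0$ lies in the set $A$ of \eqref{Dom} (hence in $L^2\cap C_{\rm{d}}$), so $0$ belongs to the point spectrum for every $b$; in terms of the ``momentum'' $m:=v-v''$ this eigenfunction is the pure distribution $m=2c\,\delta'$. For $\lambda\neq0$ the plan is to turn the integro-differential eigenvalue equation $\mathcal{L}_w v=\lambda v$ of \eqref{eig} into an explicitly solvable ODE. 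Since $\phi/2$ is the Green's function of $1-\partial_\xi^2$, applying $1-\partial_\xi^2$ to \eqref{eig} removes the convolution, and on each half-line $\xi>0$, $\xi<0$ — where $u_0=c\,\e^{-|\xi|}$ is smooth with $u_0''=u_0$ — expanding the resulting derivative and collecting terms reduces the equation to the first-order linear ODE
\[
(c-u_0)\,m'=(\lambda+b\,u_0')\,m .
\]

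Integrating this ODE is routine: with $s=\e^{-|\xi|}$ and partial fractions one finds
\[
m(\xi)=C_+\,\e^{\lambda\xi}\bigl(1-\e^{-\xi}\bigr)^{\lambda/c-b}\quad(\xi>0),\qquad m(\xi)=C_-\,\e^{\lambda\xi}\bigl(1-\e^{\xi}\bigr)^{-\lambda/c-b}\quad(\xi<0),
\]
to which one may add a distributional part $d\,\delta+e\,\delta'$ supported at $\xi=0$. The eigenfunction is then $v=\tfrac12\phi\ast m$: this automatically satisfies $v-v''=m$, and since the operation ``apply $1-\partial_\xi^2$'' is reversible on functions decaying at $\pm\infty$ (the kernel $\mathrm{span}\{\e^{\xi},\e^{-\xi}\}$ is excluded), such a $v$ indeed solves $\mathcal{L}_w v=\lambda v$.

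What remains is to impose admissibility. Behaviour at infinity: $m\sim C_\pm\,\e^{\lambda\xi}$ as $\xi\to\pm\infty$, so $v\in L^2(\mathbb{R})$ forces $C_+=0$ when $\realpart{\lambda}\geq0$ and $C_-=0$ when $\realpart{\lambda}\leq0$; in particular a purely imaginary $\lambda\neq0$ kills both coefficients, leaving $m=d\,\delta+e\,\delta'$, which yields no $L^2$ eigenfunction, so the point spectrum meets the imaginary axis only at $0$. Matching at $\xi=0$: requiring $\mathcal{L}_w v=\lambda v$ to hold distributionally across the origin ties $d,e$ to the surviving $C_\pm$ and leaves a one-dimensional eigenspace (for membership in $A$ the jump of $v$ must equal that of $u_0'$, which pins down the $\delta'$-coefficient) — this is the step where the extension $\mathcal{L}_w$ of \eqref{gen} and the space $C_{\rm{d}}$ are essential, the delicacy being how to interpret the products $u_0'v'$ and $u_0''v$ when $v$ is discontinuous. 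Local regularity at $\xi=0$: since $1-\e^{\mp\xi}\sim|\xi|$, one has $m\sim C_\pm\,|\xi|^{\pm\lambda/c-b}$, hence $v''\sim|\xi|^{\pm\lambda/c-b}$ and $v\sim|\xi|^{\pm\lambda/c-b+2}$ near the origin; for $v$ to be bounded with one-sided limits, i.e.\ in $C_{\rm{d}}$, one needs $\realpart{\pm\lambda/c-b+2}>0$, equivalently $|\realpart{\lambda}|<c(2-b)$, which is non-vacuous exactly for $b<2$, and strict since at the endpoint the behaviour degenerates to $|\xi|^{\ri\,\impart{\lambda}/c}$ (or $\log|\xi|$), which has no one-sided limit. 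If additionally $v\in A$, then $p_1=v-u_0'$ must lie in $H^s(\mathbb{R})$ for every $s<3/2$; the leftover term $|\xi|^{\pm\lambda/c-b+2}$ lies in $\bigcap_{s<3/2}H^s$ iff $\realpart{\pm\lambda/c-b+2}\geq1$, i.e.\ $|\realpart{\lambda}|\leq c(1-b)$, non-vacuous exactly for $b<1$, and now the endpoint behaviour $|\xi|^{1+\ri\,\impart{\lambda}/c}$ (or $\xi\log|\xi|$) is Lipschitz and admissible, whence the non-strict inequality.

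For the converse, given any $\lambda$ in the stated range one builds $v=\tfrac12\phi\ast m$ from the above $m$, with $d,e$ fixed by matching, and checks directly that it belongs to $L^2\cap C_{\rm{d}}$ (respectively to $A$), so every such $\lambda$ is an eigenvalue. I expect the only genuinely delicate step to be the matching at $\xi=0$: making rigorous sense of the eigenvalue equation at the degenerate point where $c-u_0$ vanishes and $v$ is allowed a jump, and extracting the correct relations among $d,e,C_\pm$. Everything else is the explicit first-order ODE analysis above, together with elementary Sobolev-space bookkeeping.
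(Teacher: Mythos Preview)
Your route is genuinely different from the paper's. The paper works directly with the third-order ODE~\eqref{left} for $v$, identifies the Frobenius exponent $r_3=2-b-\lambda/c$, builds the decaying solution $F$ of~\eqref{F}, and then computes $\mathcal{L}_w v_d-\lambda v_d$ explicitly on the three-parameter ansatz $v_d=\tilde c_0 u_0'+c_1H(-\xi)F+\tilde c_2 e^{-|\xi|}$. You instead pass to the momentum $m=v-v''$, obtain the first-order ODE $(c-u_0)m'=(\lambda+bu_0')m$, integrate it, and recover $v$ by convolution. This is cleaner on the ODE side, and your local expansion $v\sim|\xi|^{r_3}$ lines up exactly with the paper's Frobenius analysis (note a harmless slip: your $\e^{\lambda\xi}$ should be $\e^{(\lambda/c)\xi}$, though the near-origin exponent $r_3$ is unaffected). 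The Sobolev bookkeeping for the set $A$ is essentially the same as the paper's Lemma~\ref{l35}.

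Where your outline falls short of a proof is precisely the step you flag as delicate. Your sentence ``since the operation `apply $1-\partial_\xi^2$' is reversible on functions decaying at $\pm\infty$\ldots such a $v$ indeed solves $\mathcal{L}_w v=\lambda v$'' is not correct as stated: reversibility holds only modulo $\mathrm{span}\{e^\xi,e^{-\xi}\}$ \emph{on each half-line separately}, so after imposing decay one is left with a residual $\mathcal{L}_w v-\lambda v=A_-e^{\xi}\,(\xi<0),\ A_+e^{-\xi}\,(\xi>0)$ that must be forced to zero. The paper carries this out explicitly (Lemma~3.1 and the displayed computation following \eqref{zero}) and finds the two scalar conditions that fix $\tilde c_2=-\tilde c_0\lambda$ and determine $c_1$. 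Crucially, solvability of that system requires the nonvanishing of $\int_{-\infty}^0 e^{2\xi}F(\xi)\,d\xi$, which the paper establishes as a separate lemma via the substitution chain leading to \eqref{wp}. In your $m$-language this integral is essentially $\int_{-\infty}^0 e^{\xi}m(\xi)\,d\xi$, and you would need an analogous nondegeneracy statement to conclude that the matching yields a one-dimensional (rather than empty) eigenspace for every $\lambda$ in the band. Without it, the ``converse'' paragraph is incomplete: you have exhibited candidates but not confirmed that the residual can always be killed.
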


The second statement within the Theorem~\ref{im} above provides an explanation 
for the observation made numerically by Holm and Staley in~\cite{Holm03a, Holm03} 
that the peakon solutions are unstable when $b<1$. {Remark~\ref{Re} illustrates 
the fact that if the chosen space is made of functions with more regularity than the ones in $L^2(\mathbb{R})\cap C_{\rm{d}}(\mathbb{R})$, the 
width of the band obtained in the first part of the Theorem \ref{im} decreases. Actually, Eq.~\eqref{Ul} shows that with enough regularity,
the two bands in Theorem \ref{im} can be made as close as one wants to each other.}   
{A spectrum consisting of a strip about the imaginary axis also occurs in the study 
of the peaked periodic wave of both versions of the reduced Ostrovsky equations~\cite{Pelinovsky2019}. 
Although our solutions are not periodic, the nature of the result is similar.}

In what follows, we also explore numerically the waveforms of the model
for different values of $b$. We identify the leftons as stationary
solutions for $b<-1$ and illustrate their potential spectral stability.
We dynamically examine the ramp-cliff solutions for $-1<b<1$ and show
that progressively refined computations (involving more modes) suggest
that the ramp-cliff solutions deform into emitting peakons close to
$b=1$ (the more refined the computations, the closer to $b=1$ this
phenomenology arises). Beyond $b=1$ in line with the theory above,
we find that initial data breaks up spontaneously into arrays of
peakons that appear to be dynamically robust. A complementary perspective
that we provide to avoid issues with the discontinuity of the peakons
involves the stability analysis of the solutions of non-vanishing
background, as they approach the vanishing background (i.e., peakon) limit.

\section{Computation of the point spectrum}
\label{SP}
 
In this section, we compute the point spectrum of Eq.~\eqref{eig} for values 
of $\lambda$ such that $\realpart{\lambda}\geq 0$. The case where $\realpart{\lambda}<0$ 
can be obtained from the spectrum of the right side of the complex plane by 
making the observation that if $v(\xi)$ solves the eigenvalue problem of Eq.~\eqref{eig} 
for a given value of $\lambda=\lambda_0$, then $v(-\xi)$ solves that same 
eigenvalue problem with $\lambda=-\lambda_0$ as the corresponding value of 
$\lambda$. 

We first show that the operator $\mathcal{L}$ defined in Eq.~\eqref{eig} does 
not have continuous eigenvectors.

\begin{proposition}
The eigenvalue problem of Eq.~\eqref{eig} does not have solutions in $H^1(\mathbb{R})$.
\end{proposition}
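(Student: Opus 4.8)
Here is the plan. Throughout, I write $u_0=c\,e^{-|\xi|}$ for the peakon in the co-traveling frame, so that $u_0'=-c\,\operatorname{sgn}(\xi)\,e^{-|\xi|}$ has a jump of $-2c$ at $\xi=0$; this is the only discontinuity of the coefficients of~\eqref{eig}, and it enters through the term $u_0'v'$. The goal is to show that any $v\in H^1(\mathbb{R})$ solving~\eqref{eig} vanishes identically. Since $H^1(\mathbb{R})\hookrightarrow C^0(\mathbb{R})$, such a $v$ is continuous and decays at $\pm\infty$, and by the reflection symmetry $v(\xi)\mapsto v(-\xi)$, $\lambda\mapsto-\lambda$ it suffices to treat $\realpart{\lambda}\ge0$ (we also take $c>0$). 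The first step is to show that $v(0)=0$: setting $f:=\tfrac{b-3}{2}u_0'v'-\tfrac b2 u_0v\in L^2(\mathbb{R})$, one has $\mathcal{L}v=\phi'\ast f-u_0'v+(c-u_0)v'$, in which $\phi'\ast f$ is continuous (a convolution of $L^2$ functions). Rearranging gives $(c-u_0)v'=\lambda v-\phi'\ast f+u_0'v$, whose right-hand side has one-sided limits at $\xi=0$; were either nonzero, $v'$ would behave like $|\xi|^{-1}$ near the origin, contradicting $v'\in L^2$. Hence $(c-u_0)v'$ is continuous at $0$, so $-u_0'v=\mathcal{L}v-\phi'\ast f-(c-u_0)v'$ is as well; since $u_0'$ jumps by $-2c$ there and $v$ is continuous, $v(0)=0$.

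With $v(0)=0$ the $\delta$-type obstruction in $(c-u_0)v$ disappears, and a standard bootstrap based on $g:=\phi\ast f\in H^2(\mathbb{R})$ would give $v\in C^\infty(\mathbb{R}\setminus\{0\})$, so that~\eqref{eig} becomes a classical linear ODE on each half-line. Applying $1-\partial_\xi^2$ to~\eqref{eig} — using that $\tfrac12 e^{-|\xi|}$ is the Green's function of $1-\partial_\xi^2$ — and simplifying, I expect $z:=v''-v$ to satisfy a first-order linear ODE on $(0,\infty)$ and on $(-\infty,0)$, integrating explicitly to $z=C_\pm\,e^{(\lambda/c)\xi}\big(1-e^{\mp\xi}\big)^{\pm\lambda/c-b}$. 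On $(0,\infty)$ the factor $e^{(\lambda/c)\xi}$ is non-decaying when $\realpart{\lambda}\ge0$, so $v\in L^2(0,\infty)$ forces $C_+=0$; then $v''=v$ gives $v=\beta_+e^{-\xi}$, and $v(0)=0$ gives $\beta_+=0$, i.e.\ $v\equiv0$ on $(0,\infty)$. When $\realpart{\lambda}=0$ the same argument applies verbatim on $(-\infty,0)$ and the proof is finished, so the remaining case is $\realpart{\lambda}>0$.

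For $\realpart{\lambda}>0$ the next task is to propagate the vanishing across the origin. From $v\equiv0$ on $(0,\infty)$ one gets $f\equiv0$ and $g\equiv0$ there, whence $g(0)=0$ and $\int_{-\infty}^0 e^{\eta}f(\eta)\,d\eta=(\phi\ast f)(0)=g(0)=0$; integrating by parts (using $v(0)=0$) this is, for $b\ne2$, the relation $\int_{-\infty}^0 e^{2\eta}v(\eta)\,d\eta=0$, and integrating~\eqref{eig} over $\mathbb{R}$ gives in addition $\int_{-\infty}^0 v=0$. Matching $g''(0^-)$, computed both from $g''=g-2f$ and from the once-integrated equation $g=\lambda W-(c-u_0)v$ with $W'=v$, yields a relation of the form $g(0)=\big(\lambda+(b-1)c\big)v'(0^-)$, so that $v'(0^-)=0$ except when $\lambda=(1-b)c$ (which can occur only for $b<1$). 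On $(-\infty,0)$ the reduced third-order ODE is Fuchsian at $\xi=0$, and the subspace of its solutions decaying at $-\infty$ is finite-dimensional; imposing $v(0^-)=0$, the two integral constraints above, and — away from the exceptional $\lambda$ — $v'(0^-)=0$ overdetermines this subspace and forces $v\equiv0$ on $(-\infty,0)$, and the exceptional value $\lambda=(1-b)c$ is excluded by the same integral constraints. In all cases $v\equiv0$, so~\eqref{eig} has no nontrivial $H^1(\mathbb{R})$ solution.

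I expect the third step to be the real obstacle: making precise the interplay, at the singular point $\xi=0$, among the local jump and matching data, the nonlocal constraint $g(0)=(\phi\ast f)(0)$, and the explicit but $b$- and $\lambda$-dependent decaying profile on $(-\infty,0)$ — in particular, verifying that the relevant linear functionals are independent for every $\lambda$, including the borderline value $\lambda=(1-b)c$. A subsidiary technical point is the regularity bootstrap of the second step, which is what licenses replacing the integro-differential eigenvalue problem~\eqref{eig} by a classical ODE on each half-line.
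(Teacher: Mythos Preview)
Your first two steps are correct and closely parallel the paper's argument. Showing $v(0)=0$ directly from the $L^2$ constraint on $v'$ near the singular point is clean, and the paper reaches the same conclusion more indirectly: it writes the general continuous candidate as $v_c=c_1 H(-\xi)F(\xi)+c_2 e^{-|\xi|}$ and computes $\mathcal{L}v_c-\lambda v_c$ explicitly, noting that the $c_2$--term produces a genuine discontinuity, forcing $c_2=0$. Your first--order ODE for $z=v''-v$ is exactly the paper's reduction of order (the paper writes it as a first--order equation for $w'$ with $w=e^{\xi}(v'-v)$, so $w'=e^{\xi}z$). On $(0,\infty)$ and for $\realpart{\lambda}=0$ on $(-\infty,0)$, your decay argument giving $v\equiv 0$ is correct.

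The gap is in the third step, and it is exactly the place you flag. After $v(0)=0$ and $v\equiv 0$ on $(0,\infty)$, the space of decaying solutions on $(-\infty,0)$ is one--dimensional, spanned by a single function $F$ (the paper's notation) with $F(0)=0$ and $F\to 0$ at $-\infty$. So ``overdetermination'' is beside the point: you do not need several constraints, you need \emph{one} constraint that is provably nonzero on $F$. You have correctly derived the right one --- $g(0)=0$ forces $\int_{-\infty}^{0}e^{2\eta}F(\eta)\,d\eta=0$ for $b\neq 2$ --- but you have not shown this integral is nonzero, and collecting further linear functionals $\big(\int F,\;F'(0^-)\big)$ does not help unless you verify at least one of them is nonzero on $F$. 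That verification is the whole content of the paper's key lemma.

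The paper closes the gap with a one--line trick that is already latent in your formula for $z$. Set $w=e^{\xi}(F'-F)$; then $w'=e^{\xi}(F''-F)=e^{\xi}z$, and by your explicit expression $z=C_{-}\,e^{(\lambda/c)\xi}(1-e^{\xi})^{-\lambda/c-b}$ this has a fixed sign on $(-\infty,0)$ (for real $\lambda$; the complex case follows by the same computation since $w'$ never vanishes and the integral can be evaluated against a single exponential weight). Since $w(-\infty)=0$, $w$ itself has a fixed sign, hence $\int_{-\infty}^{0}e^{\xi}w\neq 0$. Integrating $e^{\xi}w=e^{2\xi}F'-e^{2\xi}F$ over $(-\infty,0)$ and using $F(0)=0$ in the boundary term from integration by parts gives $\int_{-\infty}^{0}e^{\xi}w=-3\int_{-\infty}^{0}e^{2\xi}F$, so $\int_{-\infty}^{0}e^{2\xi}F\neq 0$, which forces $c_1=0$. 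In other words, the missing idea is not a new constraint but the sign--definiteness of $w'=e^{\xi}z$, which you already have in hand; once you use it, your argument and the paper's coincide.
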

\begin{proof}
Consider the problem of Eq.~\eqref{eig} for $\xi<0$. We apply the operator 
$1-\partial_\xi^2$ to obtain the following differential equation
\beq 
\label{left}
\lambda(v-v'')+c\left(v''-v+be^\xi v+(1-b)e^\xi v'-e^\xi v''\right)'=0,
\eeq 
where we have used the fact that $\phi/2$ is a Green's function for the 
operator $1-\partial_\xi^2$. It turns out there are two solutions to 
Eq.~\eqref{left} converging as $\xi\rightarrow -\infty$, one as $e^\xi$ 
and one as $e^{\lambda\xi/c}$. There is also a solution diverging as 
$e^{-\xi}$. These decay and growth rates are found by solving the constant 
coefficient asymptotic system obtained by applying the limit $\xi\rightarrow -\infty$ 
to Eq.~\eqref{left}. Actually, Eq.~\eqref{left} admits the two explicit 
solutions $v_{l1}=e^{-\xi}$ and $v_{l2}=e^{\xi}$. 

It should be noted that $\xi=0$ in Eq.~\eqref{left} is a regular singular 
point with exponents $r_1=0,\,r_2=1$, and $r_3=-\lambda/c+2-b$. A third
solution $v_{l3}$ linearly independent that is not singular at $\xi=0$ can 
be found if we assume $\realpart{r_3}>0$. It can be defined by its series 
expansion about $\xi=0$
\beq\label{vl3}
v_{l3}=
\begin{cases}
|\xi|^{r_3}+\mathcal{O}\left(|\xi|^{r_3+1}\right)\;\;{\mbox{if}}\;\;r_3\neq 1,\\
\xi\ln|\xi|+\mathcal{O}\left(\xi^{2}\ln|\xi|\right)\;\;{\mbox{if}}\;\;r_3=1,
\end{cases}
\eeq
where
\eq{r_3=-\lambda/c+2-b.}{r3}
Upon adding an appropriate multiple of $e^\xi-e^{-\xi}$ to $v_{l3}$, the 
solution converges as $\xi\rightarrow -\infty$ and is zero at $\xi=0$.  
Indeed, let $v={F}$ be the solution of Eq.~\eqref{left} defined as
\beq
\label{F}
F=v_{l3}-C(e^{-\xi}-e^{\xi}),\;\;{\mbox{where}}\;\;%
C=\lim_{\xi\rightarrow -\infty} e^{\xi} v_{l3}(\xi).
\eeq
Then $F$ is such that $F(0)=0$ and $F\rightarrow 0$ as $\xi\rightarrow -\infty$. 
  
Applying the operator $1-\partial_\xi^2$ to the eigenvalue problem of Eq.~\eqref{eig} for 
$\xi>0$, one obtains a differential equation with only one converging solution as 
$\xi\rightarrow \infty$ given by $v=e^{-\xi}$. Hence, to look for a solution 
to Eq.~\eqref{eig} that is bounded, in the case where $\realpart{r_3}>0$, one considers
\beq
\notag
v=
\begin{cases}
\displaystyle{c_0 e^{-\xi}{\mbox{ for }}\xi>0,} \\
\displaystyle{c_1 F(\xi)+c_2 e^\xi+{\mbox{ for }}\xi<0.}
\end{cases}
\eeq
Since we look for continuous solution, and because $F(0)=0$, we need to 
take $c_0=c_2$. The most general ansatz in this case is
\beq
\label{hypcont}
v_c=c_1v_1+c_2v_2,{\mbox{ where }}v_1\equiv H(-\xi)F(\xi){\mbox{ and }}v_2\equiv e^{-|\xi|},
\eeq
with $H$ being the Heaviside function. 
\begin{lemma}
If we substitute $v=v_c$ into 
Eq.~\eqref{eig}, one obtains
\beq
\label{cont}
{\mathcal{L}}v_c-\lambda v_c=\left(c_1 \frac{3c(b-2)}{2}\int_{-\infty}^0 e^{2\xi'}F(\xi') d\xi'
+c_2\left(c\,{\mbox{sgn}}{(\xi)}-\lambda\right)\right)e^{-|\xi|}.
\eeq
\end{lemma}
\begin{proof}
By substituting $v=v_2=e^{-|\xi|}$ into Eq.~\eqref{eig}, it is a straightforward 
computation to find that
\eq{
{\mathcal{L}}v_2-\lambda v_2=\left(c\,{\mbox{sgn}}{(\xi)}-\lambda\right)e^{-|\xi|}.
}{res1}
When substituting $v=v_1=H(-\xi)F(\xi)$, there are two cases to consider: 
$\xi>0$ and $\xi<0$. If $\xi>0$, we substitute $v_1=H(-\xi)F(\xi)$ into 
Eq.~\eqref{eig} and obtain
\eq{
{\mathcal{L}}v_1-\lambda v_1&=\left(\phi\ast \left[\frac{b-3}{2}u_{0}'v_1'%
-\frac{b}{2}u_0v_1\right]+(c-u_0)v_1\right)'-\lambda v_1 \\ 
&=c\left(\int_{-\infty}^0 e^{-|\xi-\xi'|}e^{\xi'}\left[\frac{b-3}{2}  F'({\xi'})-\frac{b}{2}F({\xi'})\right]d{\xi'}\right)'\\
&=c\left(e^{-\xi}\int_{-\infty}^0 e^{2{\xi'}}\left[\frac{b-3}{2}  F'({\xi'})-\frac{b}{2}F({\xi'})\right]d{\xi'}\right)'\\
&=e^{-\xi} \left(\frac{3c(b-2)}{2}\int_{-\infty}^0 e^{2{\xi'}}F({\xi'}) d{\xi'}\right){\mbox{  for  }}\xi>0.
}{res2}
For $\xi<0$, one uses the fact that $F$ satisfies \eqref{left} itself obtained
by the application of the operator $1-\partial_\xi^2$ on the eigenvalue problem 
\eqref{eig}, that is
$$
(1-\partial_\xi^2)\left({\mathcal{L}}v_1-\lambda v_1\right)=0{\mbox{  for  }}\xi<0.
$$
This implies that ${\mathcal{L}}v_1-\lambda v_1$ is a linear combination
of $e^\xi$ and $e^{-\xi}$ for $\xi<0$. Since $F$ converges to zero as 
$\xi\rightarrow -\infty$, we have that
$$
{\mathcal{L}}v_1-\lambda v_1=Be^\xi, \,B=\mathrm{const.},\,{\mbox{  for  }}\xi<0.
$$
Furthermore, it can be checked that ${\mathcal{L}}v_1-\lambda v_1$ is 
continuous at $\xi=0$ due to the fact  that $v_1$ is a a continuous function 
such that $v_1(0)=0$. This check is done using the expression obtained below 
in Eq.~\eqref{gend} for the extension $\mathcal{L}_w$ of $\mathcal{L}$. The 
discontinuous part of ${\mathcal{L}}_wv-\lambda v$ on the second line of Eq.~\eqref{gend} 
is zero if $v(0)=0$. Thus by continuity, with Eq.~\eqref{res2} that 
$B= \left(\frac{3c(b-2)}{2}\int_{-\infty}^0 e^{2{\xi'}}F({\xi'}) d{\xi'}\right)$ 
and thus
\eq{{\mathcal{L}}v_1-\lambda v_1=e^{-|\xi|} \left(\frac{3c(b-2)}{2}\int_{-\infty}^0 e^{2\xi'}F(\xi') d\xi'\right).
}{res3} 
Then \eqref{cont} follows from \eqref{res1} and \eqref{res3}.
\end{proof}

To prove that Eq.~\eqref{eig} does not have continuous solutions, we need to 
prove that the right-hand-side (RHS) of Eq.~\eqref{cont} cannot be zero for 
any $c_1$ and $c_2$. For the RHS of Eq.~\eqref{cont} to be zero, $c_2$ must 
be zero since the expression it multiplies is discontinuous. The statement 
of the proposition then stems from the following lemma.
\begin{lemma} Assume that $r_3=2-b-\lambda/c$ has a positive real part. 
Let $F$ be the unique (up to multiplication by a scalar) solution to 
Eq.~\eqref{left} such that $F(0)=0$ and $F\rightarrow 0$ as $\xi\rightarrow -\infty$. 
Then 
\begin{equation}
\int_{-\infty}^0 e^{2\xi}F(\xi) d\xi\neq 0.
\nonumber
\end{equation}
\end{lemma}
\begin{proof}
We make the substitution $v=e^\xi u$ into Eq.~\eqref{left} to get a second-order 
equation for $u'$ since $v=e^\xi$ solves Eq.~\eqref{left}. The new equation admits 
$u'=e^{-2\xi}$ as a solution. We then make the substitution $u'=e^{-2\xi}w$ 
and get a first-order equation for $w'$ whose solution is
\beq
\label{wp}
w'=\widetilde{B}\frac{e^{(\lambda/c+1)\xi}}{(e^\xi-1)^{\lambda/c-b}}, \, \widetilde{B}=\mathrm{const.}
\eeq

This way, we have that $e^{2\xi}v'=e^{2\xi}(e^\xi u)'=e^{2\xi}(e^\xi u+e^\xi u')=e^{2\xi}v+e^{\xi}w$, 
thus implying
\beq
\label{w}
e^{2\xi}v'-e^{2\xi}v=e^{\xi}w.
\eeq
We substitute $v=F$ in the equation above and integrate both sides from 
$-\infty$ to $0$. We integrate by parts the first term of the left-hand-side 
(LHS), using the fact that $F(0)=0$, and obtain
\begin{equation}\notag
-\frac{3}{2}\int_{-\infty}^0e^{2\xi}F(\xi)d\xi=\int_{-\infty}^0e^{\xi}w(\xi)d\xi.
\end{equation}
Since the sign of $w'$ never changes by Eq.~\eqref{wp} and $w\rightarrow 0$ as 
$\xi\rightarrow -\infty$ by Eq.~\eqref{w}, we have that $w$ never changes sign 
for $\xi<0$ and the integrals above are both nonzero.
\end{proof}
\end{proof}

We now want to consider solutions to the eigenvalue problem of [cf. Eq.~\eqref{eig}] 
admitting a discontinuity at the origin such as in Eq.~\eqref{disc}. To do so, we 
introduce an extension of ${\mathcal{L}}$ as defined in Eq.~\eqref{eig}. We first 
consider ${\mathcal{L}}$ in the case $\xi<0$, which we denote by ${\mathcal{L}}_-$:
\eqnn{
{\mathcal{L}}_-v=& \frac{\rm{d}}{\rm{d}\xi}\left(\int_{-\infty}^{\infty}%
e^{-|\xi-\xi'|}\left[\frac{b-3}{2}u_{0}'(\xi')v'(\xi')\right]d\xi'-\frac{b}{2}\phi\ast %
u_0v+(c-u_0)v\right)\\
=&\frac{c(b-3)}{2}\frac{\rm{d}}{\rm{d}\xi}\Bigg(e^{-\xi}\int_{-\infty}^{\xi}e^{2\xi'}v'(\xi')d\xi'
+e^{\xi}\int_{\xi}^{0}v'(\xi')d\xi'\\
&-e^{\xi}\int_{0}^{\infty}e^{-2\xi'}v'(\xi')d\xi'\Bigg)%
+\frac{\rm{d}}{\rm{d}\xi}\left(-\frac{b}{2}\phi\ast (u_0 v)+(c-u_0)v\right),
}
where we used the fact that $u_0'=-c\,{\mbox{sgn}}(\xi)e^{-|\xi|}$. Then, we use 
integration by parts to eliminate $v'$ and obtain
\begin{align}
{\mathcal{L}}_-v=& \frac{\rm{d}}{\rm{d}\xi}\Bigg\lbrace c(3-b)%
\Bigg(e^{\xi}\int_{0}^{\infty}e^{-2\xi'}v(\xi')d\xi'+e^{-\xi}%
\int_{-\infty}^{\xi}e^{2\xi'}v(\xi')d\xi' \nonumber \\ %
&-\frac{e^\xi\left(v_0^++v_0^-\right)}{2}\Bigg)%
-\frac{b}{2}\phi\ast (u_0 v)+(c-u_0)v\Bigg\rbrace,
\label{plus}
\end{align}
where 
\begin{equation}
v_0^\pm\equiv \lim_{\xi\rightarrow 0^\pm} v(\xi). 
\nonumber
\end{equation}

In the case of $\xi>0$, we get
\begin{align}
{\mathcal{L}}_+v=& \frac{\rm{d}}{\rm{d}\xi}\Bigg\lbrace c\left(3-b\right)%
\Bigg(e^{-\xi}\int_{-\infty}^{0}e^{2\xi'}v(\xi')d\xi'+e^{\xi}%
\int_{\xi}^{\infty}e^{-2\xi'}v(\xi')d\xi'\nonumber \\
&-\frac{e^{-\xi}\left(v_0^++v_0^-\right)}{2}\Bigg)%
-\frac{b}{2}\phi\ast (u_0 v)+(c-u_0)v
\Bigg\rbrace.
\label{moins}
\end{align}
Thus the extension of the operator ${\mathcal{L}}$ (from Eq.~\eqref{eig})
reads
\beq
\notag
{\mathcal{L}}_w\equiv 
\begin{cases}
\displaystyle{{{\mathcal{L}}_+}}\;\;{\mbox{for}}\;\;\xi>0,\\
\displaystyle{{{\mathcal{L}}_-}}\;\;{\mbox{for}}\;\;\xi<0,
\end{cases}
\eeq
which has a larger domain, and $\mathcal{L}v=\mathcal{L}_wv$ if 
$v\in {\rm{Dom}}\left(\mathcal{L}\right)\subseteq L^2(\mathbb{R})$. 
Indeed, the domain of $\mathcal{L}$ (from the definition given in Eq.~\eqref{eig}), 
is restricted to $H^1(\mathbb{R})$,  while the operator $\mathcal{L}_w$ 
(from Eqs.~\eqref{plus} and~\eqref{moins}) admits functions that have a 
finite-jump discontinuity at $\xi=0$. Furthermore, it is straightforward 
to verify that $u_0'$ is in the kernel of ${\mathcal{L}}_w$, i.e.
\beq
\label{zero}
{\mathcal{L}}_wu_0'=0.
\eeq
This is done by substituting $v=-ce^{-\xi}u_0'=-c\,{\mbox{sgn}}(\xi)e^{-|\xi|}$
into \eqref{plus} and \eqref{moins}.

We can group terms in Eqs.~\eqref{plus} and~\eqref{moins} as
\begin{align}
{\mathcal{L}}_w v=&\frac{\rm{d}}{\rm{d}\xi}%
\Bigg\lbrace%
c(3-b)%
\Bigg(
e^{-\xi}\int^{\min(\xi,0)}_{-\infty}e^{2\xi'}v(\xi')d\xi'%
+e^{\xi}\int^{\infty}_{\max(\xi,0)}e^{-2\xi'}v(\xi')d\xi'%
\nonumber \\
&-\frac{e^{-|\xi|}\left(v_0^++v_0^-\right)}{2}\Bigg) %
-\frac{b}{2}\phi\ast (u_0 v)+(c-u_0)v\Bigg\rbrace,
\label{gen}
\end{align}
and rewrite $\mathcal{L}_w$ (by applying the derivative
operator in Eq.~\eqref{gen}) as
\eq{
{\mathcal{L}}_w v=&c(3-b)\left(e^{\xi}\int^{\infty}_{\max(\xi,0)}%
e^{-2\xi'}v(\xi')d\xi'-e^{-\xi}\int^{\min(\xi,0)}_{-\infty}e^{2\xi'}%
v(\xi')d\xi'\right)-\frac{b}{2}\phi'\ast (u_0 v)  \\
&+c(3-b){\rm{sgn}}(\xi)e^{-|\xi|}\left(\frac{\left(v_0^++v_0^-\right)}{2}-v\right)+\left((c-u_0)v\right)'.
}{gend}

In order to show that ${\mathcal{L}}_w$ is not closable on $L^2(\mathbb{R})$, 
it suffices to show that there is a sequence $v_n$ converging to zero, 
while $\mathcal{L}_wv_n$ does not~\cite{Evans87,Schmudgen12}. We choose the 
sequence of bump functions defined as
\eqnn{
v_n\equiv
\begin{cases}
\displaystyle{\exp{\left(\frac{1}{n^2\xi^2-1}\right)}{\mbox{ for }}|\xi|<1/n,}\\ \\
\displaystyle{0{\mbox{ otherwise.}}}
\end{cases}
}
Clearly, $v_n$ converges to 0 in $L^2(\mathbb{R})$, and all the terms in 
Eq.~\eqref{gen} do also except for the third one since $(v_{n0}^++v_{n0}^-)$ 
converges to $2e^{-1}$. 

In order to define a space on which ${\mathcal{L}}_w$ is closed, we 
first introduce the following subspace of $L^\infty(\mathbb{R})$ made 
of functions that are continuous everywhere except at $\xi=0$. More 
precisely
\eq{C_{\rm{d}}(\mathbb{R})=
\left\{v\in L^\infty(\mathbb{R}) \Big| v\in C_b(\mathbb{R}{\mbox{\textbackslash}}\{0\})%
{\mbox{ and }} \lim_{\xi\rightarrow 0^\pm}v=v_0^\pm{\mbox{ exist}}\right\}.
}{Cd}
The set $C_{\rm{d}}(\mathbb{R})$ with the $L^\infty(\mathbb{R})$ norm is 
a Banach space, since it is isomorphic to the direct sum 
$C_b((-\infty,0]) \oplus C_b([0,\infty))$ equipped 
with the norm $\max\left(\|v\|_{L^\infty((-\infty,0])},\,\|v\|_{L^\infty([0,\infty))}\right)$.

The operator ${\mathcal{L}}_w$ is defined almost everywhere on 
$L^2(\mathbb{R})\cap C_{\rm{d}}(\mathbb{R})$, and thus we have
the following lemma.

\begin{lemma}
\label{closed}
The operator $\mathcal{L}_w$ is closed on $L^2(\mathbb{R})\cap C_{\rm{d}}(\mathbb{R})$.
\end{lemma}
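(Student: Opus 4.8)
The idea is to read off from Eq.~\eqref{gen} the factorization $\mathcal{L}_w=\mathcal{D}\circ\mathcal{N}$, where $\mathcal{D}=\mathrm{d}/\mathrm{d}\xi$ is the distributional derivative and $\mathcal{N}$ is the integro-multiplication operator given by the expression between the braces on the right-hand side of Eq.~\eqref{gen}; thus $\mathcal{L}_w v=\mathcal{D}(\mathcal{N}v)$ and $\mathrm{Dom}(\mathcal{L}_w)=\{v\in X:\mathcal{D}(\mathcal{N}v)\in X\}$, where $X=L^2(\mathbb{R})\cap C_{\mathrm{d}}(\mathbb{R})$ carries the Banach norm $\|v\|_X=\|v\|_{L^2}+\|v\|_{L^\infty}$. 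With this factorization, closedness of $\mathcal{L}_w$ on $X$ follows from two facts: (i) $\mathcal{N}$ is a bounded operator $X\to X$, and (ii) $\mathcal{D}$ is continuous on $\mathcal{D}'(\mathbb{R})$ (equivalently, has closed graph as an operator on $L^2(\mathbb{R})$). Fact (ii) is standard, so the work is in (i) and in assembling the two.

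For (i), I would estimate each of the terms of $\mathcal{N}$ in the $L^2$ and $L^\infty$ norms separately. The two truncated-exponential integral operators satisfy pointwise bounds of the type $\bigl|e^{-\xi}\int_{-\infty}^{\min(\xi,0)}e^{2\xi'}v(\xi')\,d\xi'\bigr|\le\tfrac12 e^{-|\xi|}\|v\|_{L^\infty}$ (and symmetrically for the other), and their kernels are dominated by $e^{-|\xi-\xi'|}$, so by Young's inequality they are bounded by $\|v\|_{L^\infty}$ in $L^\infty$ and by $\|v\|_{L^2}$ in $L^2$; the convolution term obeys $\|\phi\ast(u_0v)\|_{L^p}\le\|\phi\|_{L^1}\|u_0\|_{L^\infty}\|v\|_{L^p}$ for $p=2,\infty$; the multiplication term obeys $\|(c-u_0)v\|_{L^p}\le 2c\,\|v\|_{L^p}$; and the rank-one boundary term $-\tfrac12 e^{-|\xi|}(v_0^++v_0^-)$ is bounded in both norms by $\|v\|_{L^\infty}$, since $|v_0^\pm|\le\|v\|_{L^\infty}$. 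Summing these gives $\|\mathcal{N}v\|_X\lesssim\|v\|_X$. It is also worth recording that $\mathcal{N}v$ is actually continuous \emph{across} $\xi=0$: every term except $(c-u_0)v$ is manifestly continuous, and $(c-u_0)v$ has no jump at the origin since $c-u_0$ vanishes there ($u_0(0)=c$); hence $\mathcal{N}v\in C_b(\mathbb{R})\cap L^2(\mathbb{R})\subset X$. This continuity is the structural reason $\mathcal{L}_w$ is closed on $X$ yet not closable on $L^2(\mathbb{R})$: on $L^2$ alone the boundary term of $\mathcal{N}$ is unbounded — which is exactly the mechanism behind the bump-function counterexample given after Eq.~\eqref{zero} — whereas retaining the $L^\infty$ component of the norm controls it.

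Granting (i) and (ii), closedness is immediate. Let $(v_n)\subset\mathrm{Dom}(\mathcal{L}_w)$ with $v_n\to v$ in $X$ and $\mathcal{L}_w v_n\to g$ in $X$. By (i), $\mathcal{N}v_n\to\mathcal{N}v$ in $X$, hence in $L^2(\mathbb{R})$, hence in $\mathcal{D}'(\mathbb{R})$; applying $\mathcal{D}$, which is continuous on $\mathcal{D}'(\mathbb{R})$, gives $\mathcal{D}(\mathcal{N}v_n)\to\mathcal{D}(\mathcal{N}v)$ in $\mathcal{D}'(\mathbb{R})$. But $\mathcal{D}(\mathcal{N}v_n)=\mathcal{L}_w v_n\to g$ in $X$, hence in $\mathcal{D}'(\mathbb{R})$. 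By uniqueness of limits in $\mathcal{D}'(\mathbb{R})$, $\mathcal{D}(\mathcal{N}v)=g\in X$; therefore $v\in\mathrm{Dom}(\mathcal{L}_w)$ and $\mathcal{L}_w v=g$. Thus $\mathcal{L}_w$ has closed graph on $X$.

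The only genuinely delicate step is the collection of operator bounds in (i). These are elementary Young/Minkowski estimates for exponential kernels; the one essential subtlety is that the boundary term $e^{-|\xi|}(v_0^++v_0^-)$ must be controlled by $\|v\|_{L^\infty}$ and not by $\|v\|_{L^2}$, which is precisely why the enlarged space $L^2(\mathbb{R})\cap C_{\mathrm{d}}(\mathbb{R})$ — rather than $L^2(\mathbb{R})$ — is the natural setting, and why the convergence of the one-sided limits $v_{n,0}^\pm$ (forced by convergence in $X$) is what rescues closedness here.
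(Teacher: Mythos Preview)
Your proof is correct and takes a genuinely different route from the paper's. The paper splits off the principal part $\widetilde{\mathcal{L}}_w v=((c-u_0)v)'-c(3-b)\,\mathrm{sgn}(\xi)e^{-|\xi|}v$, proves directly that $\widetilde{\mathcal{L}}_w$ is closed, and then shows that $\mathcal{L}_w-\widetilde{\mathcal{L}}_w$ is \emph{compact} on $X=L^2(\mathbb{R})\cap C_{\mathrm{d}}(\mathbb{R})$ (Hilbert--Schmidt on $L^2$, Eveson's kernel criterion on $L^\infty$, and a direct subsequence argument for the rank-one boundary term), invoking Kato's result that a relatively compact perturbation of a closed operator is closed. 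Your argument instead factors $\mathcal{L}_w=\mathcal{D}\circ\mathcal{N}$, establishes only \emph{boundedness} of $\mathcal{N}$ on $X$ via elementary Young-type estimates, and then transfers closedness from the derivative by passing through $\mathcal{D}'(\mathbb{R})$. Your observation that $\mathcal{N}v$ is continuous across $\xi=0$ (since $(c-u_0)(0)=0$) is exactly what guarantees that the distributional derivative $\mathcal{D}(\mathcal{N}v)$ coincides with the piecewise classical formula~\eqref{gend}, so the identification $\mathcal{L}_w=\mathcal{D}\circ\mathcal{N}$ is legitimate. Your approach is shorter and more elementary; the paper's approach costs more work but yields compactness of the lower-order terms as a byproduct, which the authors explicitly flag as potentially useful for the full spectrum.
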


\begin{proof}
We first consider the operator $\widetilde{\mathcal{L}}_w$ defined by
\eq{
\widetilde{{\mathcal{L}}}_w v=\left((c-u_0)v\right)'-c(3-b){\rm{sgn}}(\xi)e^{-|\xi|}v.
}{gendt}
We prove that ${{\mathcal{L}}}_w-\widetilde{{\mathcal{L}}}_w$ is compact on 
$L^2(\mathbb{R})\cap C_{\rm{d}}(\mathbb{R})$ followed by the use of Theorem 1.11 %
of~\cite{Kato}\footnote{It states that if an operator is closed, then so is any 
relatively compact perturbation of that operator.}.

We first prove that each term on the first line of Eq.~\eqref{gend} is compact 
on $L^2(\mathbb{R})\cap C_{\rm{d}}(\mathbb{R})$ by proving they are compact 
on both $L^2(\mathbb{R})$ and $L^\infty(\mathbb{R})$. They are compact on 
$L^2(\mathbb{R})$ because each term on the first line of Eq.~\eqref{gend} can 
be written as an integral operator for some kernel $K\in L^1(\mathbb{R}^2)\cap L^2(\mathbb{R}^2)$.
As such, each of those terms defines a Hilbert-Schmidt integral operator, 
known to be compact (see~\cite{Renardy}, p.~262). For example, the first term 
in parentheses in Eq.~\eqref{gend} corresponds to the kernel
\eq{
K_1=
\begin{cases}
\displaystyle{e^{\xi-2\xi'}\;\; \rm{for}\;\;\xi'>\max{(\xi,0)}},\\
\displaystyle{0\;\;{\rm{otherwise}}.}
\end{cases}
}{Kernelexample}
To prove the integral terms in Eq.~\eqref{gend} are compact on $L^\infty(\mathbb{R})$, 
we use the Corollary 5.1 of~\cite{Eveson}, giving the conditions on the kernel of 
an integral operator for it to be compact on $L^\infty(\mathbb{R}^n)$. Those conditions 
reduce to the following in the case of $n=1$ dimension.

Assume that there is a constant $M$ such that for almost all $\xi\in\mathbb{R}$, 
$K(\xi,\cdot)\in L^1(\mathbb{R})$ and $\|K(\xi,\cdot)\|_1\leq M$. Then the 
operator is compact if and only if for any $\varepsilon>0$ there exist 
$\delta>0$ and $R>0$ such that for almost all $\xi\in\mathbb{R}$ and all 
$h\in(-\delta,\delta)$ we have
\eq{\int_{\mathbb{R}\setminus(-R,R)}|K(\xi,\xi')|\mathrm{d}\xi'<\varepsilon}{c1}
and
\eq{\int_{\mathbb{R}}|K(\xi,\xi'+h)-K(\xi,\xi')|\mathrm{d}\xi'<\varepsilon.}{c2}

To check those conditions on the kernel $K_1$ defined in Eq.~\eqref{Kernelexample}, 
we compute its $L^1$ norm and find that it is bounded by $M=1/2$. We can also compute
the integral in Eq.~\eqref{c1} and find that is is bounded by $e^{-R}/2$. Finally, 
the integral in Eq.~\eqref{c2} is found to be bounded by $1-e^{-2|h|}$. The conditions 
of compactness on $L^\infty(\mathbb{R})$ can also be verified straightforwardly for 
the two other terms of the first line of Eq.~\eqref{gend}. For the second term, we 
have  
\eqnn{
K_2=
\begin{cases}
\displaystyle{e^{-\xi+2\xi'}\;\; \rm{for}\;\;\xi'<\min{(\xi,0)}},\\
\displaystyle{0\;\;{\rm{otherwise}}.}
\end{cases}
}
The condition on the $L^1$ norm, and conditions \eqref{c1} and \eqref{c2} are verified 
based on the fact that
$$
K_2(\xi,\xi')=K_1(-\xi,-\xi').
$$
For the third term in Eq.~\eqref{gend}, we have the kernel 
$$
K_3=-K_{3a}+K_{3b},
$$
where
\eqnn{
K_{3a}=
\begin{cases}
\displaystyle{e^{-\xi+\xi'-|\xi'|}\;\; \rm{for}\;\;\xi>\xi'}\\
\displaystyle{0\;\;{\rm{otherwise}}}
\end{cases},\;\;
K_{3b}=
\begin{cases}
\displaystyle{e^{\xi-\xi'-|\xi'|}\;\; \rm{for}\;\;\xi<\xi'}\\
\displaystyle{0\;\;{\rm{otherwise}}}
\end{cases}.
}
Since $K_{3b}(\xi,\xi')=K_{3a}(-\xi,-\xi')$, we only have to verify 
the conditions for $K_{3a}$. An integral computation shows that the 
$L^1$ norm of $K_{3a}$ is bounded by $1/2+1/e$. Furthermore, another 
integral computation shows that the integral in \eqref{c1} is bounded 
by $e^{-R}$. For the integral in \eqref{c2}, one has to consider several 
cases depending on the signs of $\xi$, $h$, and $\xi-h$. In each case, 
one finds that the integral is bounded by an expression that goes to 
zero as $h\rightarrow 0$. Note that it would have been sufficient to show 
that the integral operators on the first line of Eq.~\eqref{gend} are 
continuous in order to prove the lemma. However, compactness is the stronger 
property that may be useful in the future to obtain the full spectrum.

We now prove that the remaining term of ${{\mathcal{L}}}_w-\widetilde{{\mathcal{L}}}_w$
defined by 
$$
Av\equiv \frac{c(3-b)\left(v_0^++v_0^-\right)}{2}{\rm{sgn}}(\xi)e^{-|\xi|},
$$
is compact on $L^2(\mathbb{R})\cap C_{\rm{d}}(\mathbb{R})$. To prove compactness, 
we need to take a bounded sequence $\{v_n\}$ of $C_{\rm{d}}(\mathbb{R})$ and prove 
that $\{Av_n\}$ has a Cauchy subsequence. The boundedness of  $\{v_n\}$ on 
$C_{\rm{d}}(\mathbb{R})$ implies the boundedness of  $\{v_{n0}^\pm\}$, with 
$v_{n0}^\pm\equiv \lim_{\xi\rightarrow 0^\pm}v_n$. Thus, the sequence 
$\{v_{n0}^++v_{n0}^-\}$ contains a Cauchy subsequence $\{v_{n_i0}^++v_{n_i0}^-\}$. 
With the $L^\infty(\mathbb{R})$ norm we have
$$
\| Av_{n_i}-Av_{n_j}\|_{L^\infty(\mathbb{R})}=%
\frac{c(3-b)}{2}\Big|(v_{n_i0}^++v_{n_i0}^-)-(v_{n_j0}^++v_{n_j0}^-)\Big|,
$$
and with the $L^2(\mathbb{R})$ norm
\eqnn{
\| Av_{n_i}-Av_{n_j}\|_{L^2(\mathbb{R})}&= %
\frac{c(3-b)}{2}\Big|v_{{n_i}0}^++v_{{n_i}0}^--v_{n_j0}^+-v_{n_j0}^-\Big|\,%
\|e^{-|\xi|}\|_{L^2(\mathbb{R})}\\
&=\frac{c(3-b)}{2}\Big|(v_{{n_i}0}^++v_{{n_i}0}^-)-(v_{n_j0}^++v_{n_j0}^-)\Big|.
}
Thus, the sequence $\{Av_{n_i}\}$ is a Cauchy subsequence of $\{Av_{n}\}$ on 
both $C_{\rm{d}}(\mathbb{R})$ and $L^2(\mathbb{R})$. We conclude that $A$ is 
compact on $L^2(\mathbb{R})\cap C_{\rm{d}}(\mathbb{R})$.

It now suffices to prove that $\widetilde{{\mathcal{L}}}_w$ defined in 
Eq.~\eqref{gendt} is closed. Assume we have a converging sequence in the 
domain of $\mathcal{L}_w$, $v_n\rightarrow {v}$ such that $\widetilde{{\mathcal{L}}}_w v_n$ 
also is converging. We need to show that $\widetilde{{\mathcal{L}}}_w v_n\rightarrow \widetilde{{\mathcal{L}}}_w {v}$.
The convergence of the term $-c(3-b){\rm{sgn}}(\xi)e^{-|\xi|}v_n$ 
to $-c(3-b){\rm{sgn}}(\xi)e^{-|\xi|}v$ in $L^\infty(\mathbb{R})\cap L^2(\mathbb{R})$ 
is immediate. For the term $\left((c-u_0)v\right)'$, the $L^2(\mathbb{R})$ convergence of 
$v_n$ to ${v}$ implies the $L^2(\mathbb{R})$ of $(c-u_0)v$ to $(c-u_0)\widetilde{v}$. 
Furthermore, since $\left((c-u_0)v_n\right)'$ itself is convergent in $L^2(\mathbb{R})$, 
it converges to $\left((c-u_0){v}\right)'$, by definition of convergence on 
$H^1(\mathbb{R})$. The convergence of the term $\left((c-u_0)v_n\right)'$ in the sup 
norm to $\left((c-u_0){v}\right)'$ follows from the fact that  $\widetilde{\mathcal{L}}_w v_n$ 
converges in both $L^2(\mathbb{R})$ and $C_{\rm{d}}(\mathbb{R})$ to the same 
function, by the definition of the norm on $L^2(\mathbb{R})\cap C_{\rm{d}}(\mathbb{R})$ 
as being the maximum of the two norms. 
\end{proof}

We are now ready to prove Theorem \ref{im}.
\begin{proof}
We first compute the point spectrum of $\mathcal{L}_w$ associated with the eigenvalue 
problem of Eq.~\eqref{gen}. The most general candidate for a discontinuous solution at 
$\xi=0$ is given by Eq.~\eqref{hypcont}. Without loss of generality, we choose
\eqnn{
c_0&=c\,\widetilde{c}_0-\widetilde{c}_2,\\
c_2&=c\,\widetilde{c}_0+\widetilde{c}_2.
}
Hence, the most general ansatz for a solution in $L^2(\mathbb{R})$ 
in this case is
\eqnn{
v_d=\widetilde{c}_0u_0'+c_1v_1+\widetilde{c}_2 v_2,{\mbox{ where }}v_1=%
H(-\xi)F(\xi){\mbox{ and }}v_2=e^{-|\xi|},
}
where we used the fact that $u_0'=-c\,{\mbox{sgn}}(\xi)e^{-|\xi|}$.
Computing ${\mathcal{L}}_w v_d$, using Eqs.~\eqref{cont} and \eqref{zero}, 
we find
\begin{align}
{\mathcal{L}}_w v_d-\lambda v_d=&e^{-|\xi|}\Bigg ( %
c_1 \frac{3c(b-2)}{2}\int_{-\infty}^0 e^{2\xi}F(\xi) %
d\xi\nonumber +\widetilde{c}_2\left(c\,{\mbox{sgn}}{(\xi)}-\lambda\right)\\
&+
\widetilde{c}_0\,c\,\lambda %
{\mbox{sgn}}(\xi)
\Bigg ).
\notag
\end{align}
Thus, $v_d$ is a solution given that $\widetilde{c}_2=-\widetilde{c}_0\lambda$ 
and $c_1$ is chosen such that $c_1=\frac{2\widetilde{c}_2}{{3c(b-2)}}%
\lambda\left(\int_{-\infty}^0 e^{2\xi}F(\xi) d\xi\right)^{-1}$. From the 
expansions given in Eq.~\eqref{vl3}, if we add the restriction that $F$ be 
in $C_{\rm{d}}(\mathbb{R})$, we have that 
\eq{\realpart{r_3}=2-b-\realpart{\lambda}/c> 0,}{Rr3c} 
i.e.~any $\lambda$ satisfying $0<\realpart{\lambda}< c(2-b)$ is in the point 
spectrum.

The following lemma proves the second part of Theorem \ref{im}.
\begin{lemma}
\label{l35}
The function  $v_1=H(-\xi)F(\xi)$, where $F$ solves Eq.~\eqref{left} 
such that $F(0)=0$ and $F(\xi)\rightarrow 0$ as $\xi\rightarrow -\infty$, is in 
$H^s(\mathbb{R})$, for all $s<3/2$ if and only if $r_3$ given by Eq.~\eqref{r3} 
satisfies $\realpart{r_3}=2-b-\realpart{\lambda}/c\geq 1$.
\end{lemma}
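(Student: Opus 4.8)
The plan is to analyze the local regularity of $v_1 = H(-\xi)F(\xi)$ near $\xi = 0$, since away from the origin $F$ is smooth (it solves the linear ODE \eqref{left} with analytic coefficients on $(-\infty,0)$) and decays exponentially as $\xi \to -\infty$, so the only obstruction to Sobolev regularity comes from the behavior at the regular singular point $\xi = 0$. From the Frobenius expansion \eqref{vl3}, near $\xi = 0^-$ the solution $F$ behaves like $|\xi|^{r_3}$ (modulo the logarithmic exceptional case $r_3 = 1$), so after multiplication by $H(-\xi)$ the function $v_1$ looks locally like $|\xi|^{r_3}$ on the left and identically zero on the right. The key is therefore to determine for which real exponents $\mu = \realpart{r_3}$ a function that equals $|\xi|^{r_3}\chi(\xi)$ on $\xi < 0$ (with $\chi$ a smooth cutoff) and $0$ on $\xi > 0$ lies in $H^s(\mathbb{R})$ for all $s < 3/2$.

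The concrete steps I would carry out: First, reduce to a model function $g(\xi) = |\xi|^{r_3} H(-\xi)\chi(\xi)$ by subtracting off the smooth part of the Frobenius series, noting that the higher-order terms $\mathcal{O}(|\xi|^{r_3+1})$ and the $e^{\xi}, e^{-\xi}$ contributions (which are combined into $F$ with $F(0)=0$) are more regular, hence do not affect the threshold. Second, recall the standard fact that a one-sided power $|\xi|^\mu H(-\xi)$ (cut off) belongs to $H^s$ locally precisely when $s < \mu + 1/2$; equivalently its Fourier transform decays like $|k|^{-\mu-1}$ so the $H^s$-integrand $|k|^{2s}|\widehat{g}(k)|^2$ is integrable iff $2s - 2\mu - 2 < -1$, i.e. $s < \mu + 1/2$. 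Third, combine: $v_1 \in H^s$ for all $s < 3/2$ iff $\mu + 1/2 \geq 3/2$, i.e. $\realpart{r_3} \geq 1$; and if $\realpart{r_3} < 1$ then choosing $s$ with $\realpart{r_3} + 1/2 < s < 3/2$ shows $v_1 \notin H^s$. Fourth, handle the exceptional case $r_3 = 1$ (here $\realpart{r_3} = 1 \geq 1$ so it must be included): the local behavior $\xi \ln|\xi|$ is in $H^s$ for all $s < 3/2$ (the logarithm costs an arbitrarily small amount of regularity but never pushes below the $\xi^1$ threshold of $s < 3/2$), consistent with the ``if'' direction. One should also note that when $\realpart{r_3} > 1$ is large, $v_1$ may fail to be in $H^s$ for larger $s$ because of the one-sided cutoff at $\xi = 0$ creating a jump in some derivative — but this is irrelevant since the claim only concerns $s < 3/2$.

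The main obstacle, and the step deserving the most care, is the rigorous reduction from the full solution $F$ to the model power function: one must verify that the subleading terms in the Frobenius series \eqref{vl3} genuinely contribute strictly more regular pieces, and in particular that the convergence of the series near $\xi = 0$ is good enough that term-by-term estimates are justified, and that the two elementary solutions $e^{\pm\xi}$ that are blended into $F$ (via the constant $C$ in \eqref{F}) are smooth and hence harmless. A secondary subtlety is that $r_3$ may be complex (since $\lambda$ may be complex), so $|\xi|^{r_3} = |\xi|^{\realpart{r_3}} e^{\ri \impart{r_3}\ln|\xi|}$ carries an oscillatory logarithmic phase; one must check this oscillation does not change the Sobolev threshold, which it does not since $|\,|\xi|^{r_3}| = |\xi|^{\realpart{r_3}}$ controls the relevant integrals (the phase only mildly redistributes frequency content, analogous to the $\ln|\xi|$ factor in the exceptional case). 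Once these points are pinned down, the ``if and only if'' with threshold $\realpart{r_3} \geq 1$ follows directly from the one-sided power counting.
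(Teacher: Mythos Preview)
Your proposal is correct and follows essentially the same route as the paper: isolate the leading Frobenius term, argue that the remainder is strictly more regular, and then determine the Sobolev threshold of the one-sided power via its Fourier decay. The only noteworthy difference is tactical: where you introduce a generic smooth cutoff $\chi$ and invoke the standard $H^s$ threshold $s<\realpart{r_3}+1/2$ for one-sided powers, the paper instead uses the specific weight $e^{\xi}$, setting $T(\xi)=H(-\xi)\,|\xi|^{r_3}e^{\xi}$, which has the exact Fourier transform $\widehat{T}(w)=\ri\,\Gamma(r_3+1)/(1-\ri w)^{r_3+1}$; this makes the $H^s$ criterion a one-line computation and simultaneously disposes of the complex-$r_3$ issue you flagged, since the formula is valid for complex $r_3$ and $|(1-\ri w)^{-(r_3+1)}|=(1+w^2)^{-(\realpart{r_3}+1)/2}$. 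The exceptional case $r_3=1$ is handled the same way in both: the paper likewise computes the explicit Fourier transform of $H(-\xi)\,|\xi|\ln|\xi|\,e^{\xi}$ and reads off membership in $H^s$ for all $s<3/2$.
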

\begin{proof}
Because the series expansion of Eq.~\eqref{vl3} admits a different form for 
$r_3=1$ and $r_3\neq 1$, we treat the two cases separately starting with $r_3\neq 1$. 
In view of the definition of $F$ from Eqs.~\eqref{vl3} and~\eqref{F}, if we 
require $F$ to be in $H^1(\mathbb{R})$, it implies that $\realpart{r_3}>1/2$. 
We write $F$ as $F=\widetilde{F}+H(-\xi)\,|\xi |^{r_3}e^\xi $. Because $\realpart{r_3}>1/2$, 
we have that $\widetilde{F}$ is in $H^2(\mathbb{R})$. It thus suffices to show 
that the function 
\eq{
T(\xi)\equiv H(-\xi)\,|\xi |^{r_3}e^\xi =
\left\{
\begin{array}{l}
|\xi|^{r_3}e^{\xi},\;\;\xi<0,\\
\\
0,\;\;\xi>0,
\end{array}
\right.
}{T}
is in $H^s(\mathbb{R})$ for all $s<3/2$ if only if $\realpart{r_3}\geq 1$. As an 
example, if we use $r_3=1$ in Eq.~\eqref{T}, then the Fourier transform of $T$ is 
given by
$$
\widehat{T}(w)=\frac{1}{(1-\ri w)^2}.
$$ 
Recall that the condition for $T$ to be in $H^s(\mathbb{R})$ is that $(1+w^2)^{s/2}\hat{T}(w)$ 
be in $L^2(\mathbb{R})$ \cite{Hitch}. This condition is satisfied if and only if $s<3/2$. The 
same condition on $s$ is obtained if we use $r_3$ such that $\realpart{r_3}=1$ and thus 
it is clear from $T$ given by Eq.~\eqref{T} that it will be in $H^s(\mathbb{R})$ for all 
$s<3/2$ if and only if $\realpart{r_3}\geq 1$. It can also be checked directly 
by the following expression giving the Fourier transform of $T$ [cf. Eq.~\eqref{T}]
for general values of $r_3$
$$
\widehat{T}(w)=\frac{\ri \Gamma(r_3+1)}{(1-\ri w)^{r_3+1}},
$$
where $\Gamma$ is the Gamma function. As per the case for $r_3=1$, from the 
second line of Eq.~\eqref{vl3}, we consider the function
$$
T_1(\xi)\equiv H(-\xi)\,|\xi|\ln{(|\xi|)}e^\xi =
\left\{
\begin{array}{l}
|\xi|\ln{(|\xi|)}e^{\xi},\;\;\xi<0,\\
\\
0,\;\;\xi>0,
\end{array}
\right.
$$
which can be verified to be in $H^s(\mathbb{R})$ for all $s<3/2$ by the 
expression of its Fourier transform:
$$
\widehat{T}_1(w)=\frac{\ri (\ri w-1)^2\left(\ln(w^2+1)-2\arctan(w)%
-2\ri (\gamma-1)\right)}{2(w^2+1)^2},
$$
where $\gamma$ is Euler's constant.
\end{proof}
\end{proof}

{
\begin{remark}
\label{Re}
In Theorem \ref{im}, we use the space $L^2(\mathbb{R})\cap C_{\rm{d}}(\mathbb{R})$ 
(with $C_{\rm{d}}(\mathbb{R})$ defined in \eqref{Cd}).
If more regularity is required by using the space 
\eqnn{H_{\rm{d}}(\mathbb{R})=
\left\{v\in L^2(\mathbb{R}) \Big| 
\;v |_{(-\infty,0)}\in H^1\left((-\infty,0)\right)\;%
{\rm{ and }}\;v |_{(0,\infty)}\in H^1\left((0,\infty)\right)\right\}
}
instead, one 
finds the point spectrum in the first part of Theorem \ref{im} to be 
$0< |\realpart{\lambda}|\leq c(3/2-b)$. Indeed, the proof of Theorem \ref{im} 
goes through with the modification that the condition $\realpart{r_3}\geq 1/2$ 
(instead of $\realpart{r_3}> 0$ specified in Eq.~\eqref{Rr3c}) must be satisfied 
in order for $F$ to be in  $H_{\rm{d}}(\mathbb{R})$. The closure of ${\mathcal{L}}_w$ holds because, 
for any interval $I\subset \mathbb{R}$,
$\|v\|_{L^\infty(I)}\leq C_s\| v\|_{H^1(I)}$ for some constant $C_s$. Furthermore, if $H^1$ is 
replaced by $H^s$, $1\leq s<3/2$, in the definition of $H_{\rm{d}}(\mathbb{R})$ above, then
the condition on $r_3$ becomes $\realpart{r_3}>  s-1/2$. This follows 
from the Fourier transform computation done in the proof of Lemma \ref{l35} 
and from Lemma 5.2 of \cite{Hitch} giving a criterion for a function to be in a fractional Sobolev space on a subset of $\mathbb{R}$. 
The band specified in the first part of Theorem \ref{im} is then 
found to be 
\eq{
0< |\realpart{\lambda}|< c(5/2-s-b),
}{Ul}
which limits to the band specified in the second part of Theorem \ref{im}  as $s\rightarrow 3/2$.
\end{remark}
}

\section{Numerical Results}
\label{numer_res}
In this section, we present numerical results concerning the existence and
spectral stability of standing and traveling wave solutions to the $b$-family 
of  equations, i.e., Eq.~\eqref{bfamily}. The discussion that follows next
is complemented by systematically presenting results on spatio-temporal 
evolution of generic (Gaussian) and peakon initial data.

\subsection{Standing and traveling waves}
{First, we shall be interested in the ``lefton'' solutions. 
A single lefton is a stationary solution of Eq.~(\ref{bfamily}) given by the 
explicit formula \cite{dhh2}}
\beq\label{lefton} 
u = A\,\Big(\cosh \gamma (x-x_0)\Big)^{-\frac{1}{\gamma}}, \qquad \gamma = -\frac{b+1}{2},  
\eeq 
where $A$ and $x_{0}$ are its amplitude and center, respectively. For a
given $b$, this is a 2-parameter family of solutions, given the arbitrary 
choice  $A$ and $x_0$. The form of Eq.~\eqref{lefton} suggests that leftons 
exist only for the parameter regime $b <-1$. This is confirmed numerically 
by parameter continuation in $b$. We start with a lefton solution $u$ given 
by Eq.~\eqref{lefton} with $b = -1.2$ and $x_0 = 0$, normalized so that 
$\| u \|_{L^2}^2=1$. We then increase $b$ using a secant-based predictor-corrector
parameter continuation algorithm in \textsc{Matlab}. So that a single member 
of the 2-parameter family is selected, we add the constraints that $u$ is an 
even function and that $\| u \|_{L^2}^2=1$. In all cases, the parameter continuation 
stops just before $b=-1$ is reached. Since the width of the lefton solution 
[cf. Eq.~\eqref{lefton}] increases as $b$ approaches $-1$, the exact stopping 
point depends on the domain size used for continuation, as well as the discretization 
of the problem (e.g.~number of grid points used) and the continuation step size.

We investigate the spectral stability of a lefton solution $u=u_0(x)$ of the 
$b$-family as written in Eq.~\eqref{bfamily}. We linearize the $b$-family about 
$u=u_0(x)$ and obtain the following eigenvalue problem
\beq 
\label{eigp}
\lambda(v-v'')+\left(c(v''-v)+(b+1)u_0v+(1-b)u_0'v'-u_0v''-u_0''v\right)'=0,
\eeq
where the prime denotes derivative with respect to $\xi$. We rearrange Eq.~\eqref{eigp} 
to get $(I - \partial_{\xi}^2)^{-1} \mathcal{L}(u_0) v = \lambda v$, where $\mathcal{L}(u_0)$ 
is the linear operator
\begin{equation}\notag
\mathcal{L}(u_0) = -\partial_{\xi}\left(c(\partial_{\xi}^2-I)+(b+1)u_0 I +(1-b)u_0' %
\partial_{\xi} -u_0 \partial_{\xi}^2 -u_0'' \right).
\end{equation} 
We can verify directly that $\mathcal{L}(u_0) u_0' = 0$, which results from translation
invariance of the system. In addition, when $c = 0$, $\mathcal{L}(u_0) u_0 = 0$. To find 
the spectrum, we {again} use Fourier spectral differentiation matrices for the differential 
operators and compute the eigenvalues using the built-in eigenvalue solver \texttt{eig} in 
\textsc{MATLAB}. Figure \ref{leftoneig} shows the computed spectrum for a lefton solution 
with parameter $b = -1.1$ and amplitude $A = 1$ ($c = 0$ for all leftons). The maximum real 
part of the spectrum is of order $10^{-7}$, suggesting that the spectrum is purely imaginary. 
In addition, we verify numerically that $(I - \partial_{\xi}^2)^{-1} \mathcal{L}(u_0) u_0' = 0$ 
and $(I - \partial_{\xi}^2)^{-1} \mathcal{L}(u_0) u_0 = 0$. We expect that the additional 
degree of freedom in $A$ in Eq.~\eqref{lefton} will generate an eigenfunction in the kernel of 
$(I - \partial_{\xi}^2)^{-1} \mathcal{L}(u_0)$, and we can verify numerically that 
$\partial u_0/\partial A = u_0$. The same spectral results are obtained for a wide range of 
$A$ and $b < -1$.

\begin{figure}[pt!]
\begin{center}
\includegraphics[height=.25\textheight]{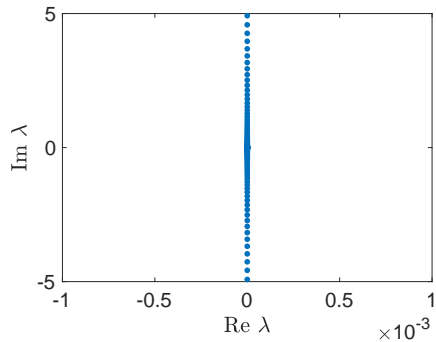}
\end{center}
\caption{Spectrum of $(I - \partial_x^2)^{-1} \mathcal{L}(u_0)$ for a lefton solution
with parameter values $A = 1$ and $b = -1.1$ which is obtained by using Fourier spectral 
methods with $N = 1024$ grid points (and periodic boundary conditions).
}
\label{leftoneig}
\end{figure}

For the peakon solutions, which are traveling waves, this method of computing the spectrum
does not work since the peakon is not differentiable at its center. As an alternative, we will 
compute the spectrum of the family of {\it smooth} solitary waves on a nonzero background \cite{Guo05}, 
which are solutions to the equation 
\begin{equation} \label{Guosolitoneq} 
c(u_{\xi\xi}-u) + (b+1)\frac{u^2}{2}+(1-b)\frac{u_{\xi}^2}{2}-uu_{\xi \xi} = g,\;\;\xi=x-ct,
\end{equation}
obtained by integrating the co-traveling frame ODE obtained from Eq.~\eqref{bfamily}.
The limit of these smooth solitons, which we compute numerically by parameter continuation 
in $g$ (Figure \ref{Guosolitons}, left panel), is the peakon
solution.

\begin{figure}[!pt]
\begin{center}
\includegraphics[height=.25\textheight]{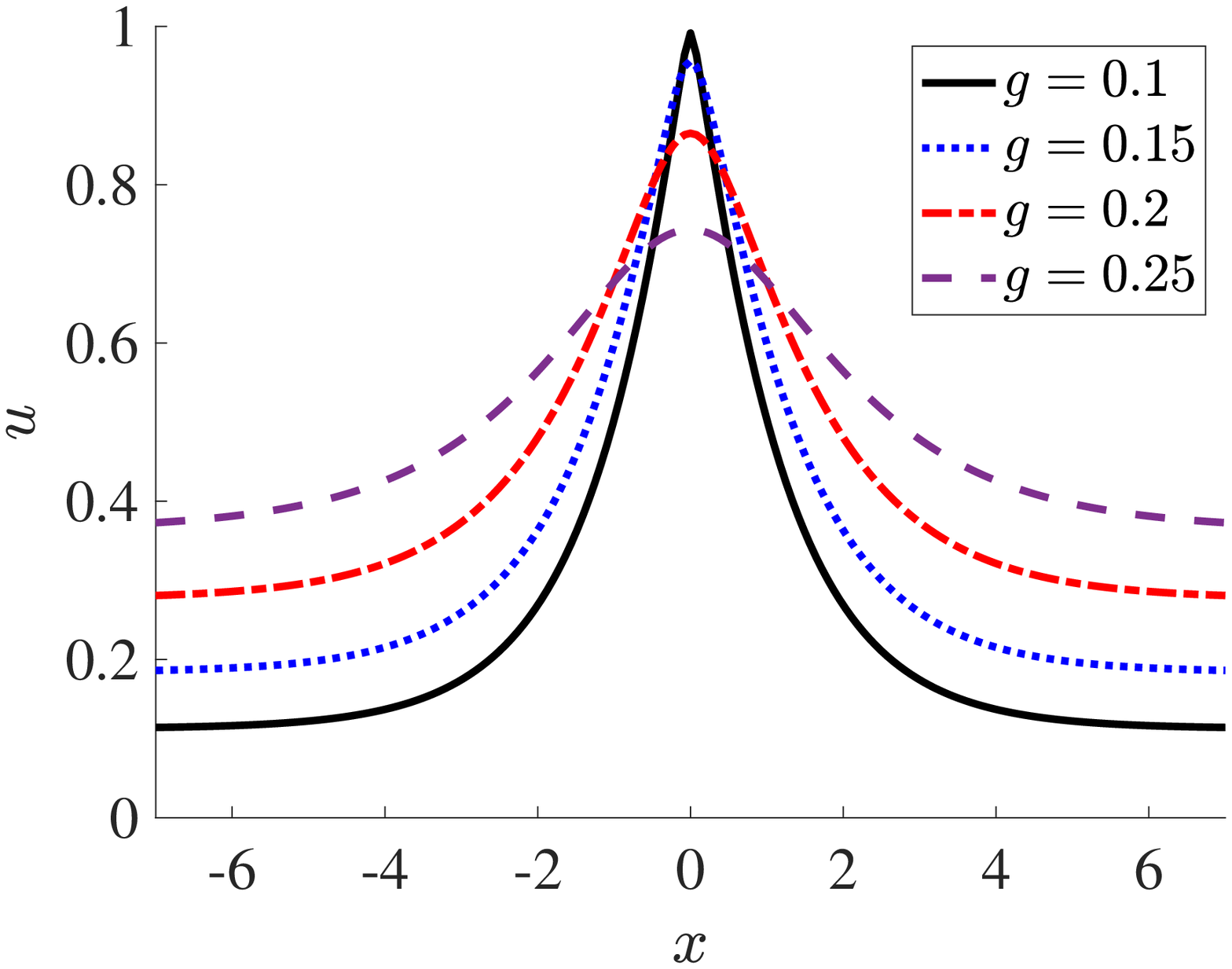}
\hskip -0.5cm
\includegraphics[height=.25\textheight]{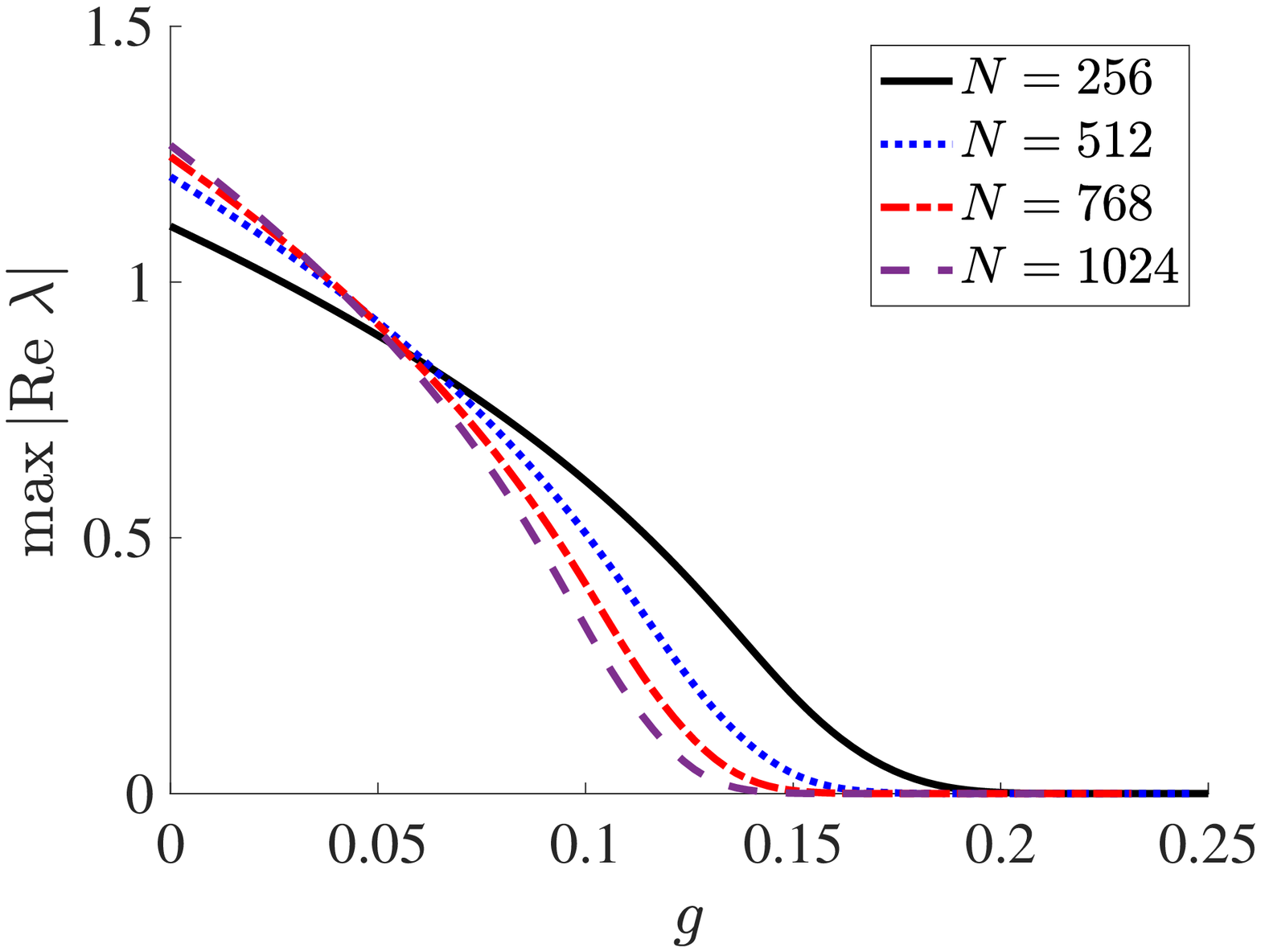} \\
\includegraphics[height=.25\textheight]{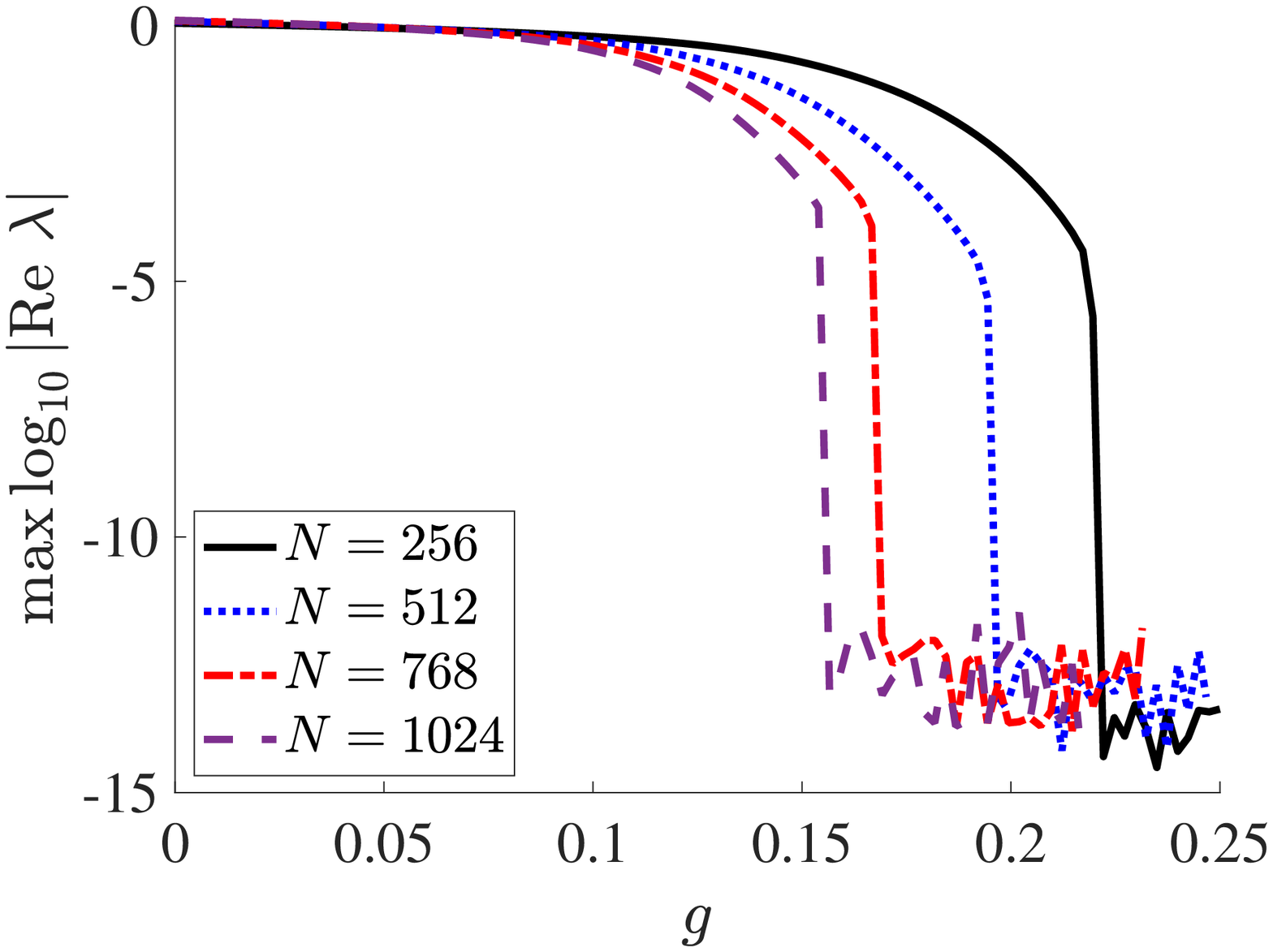} 
\hskip -0.5cm
\includegraphics[height=.25\textheight]{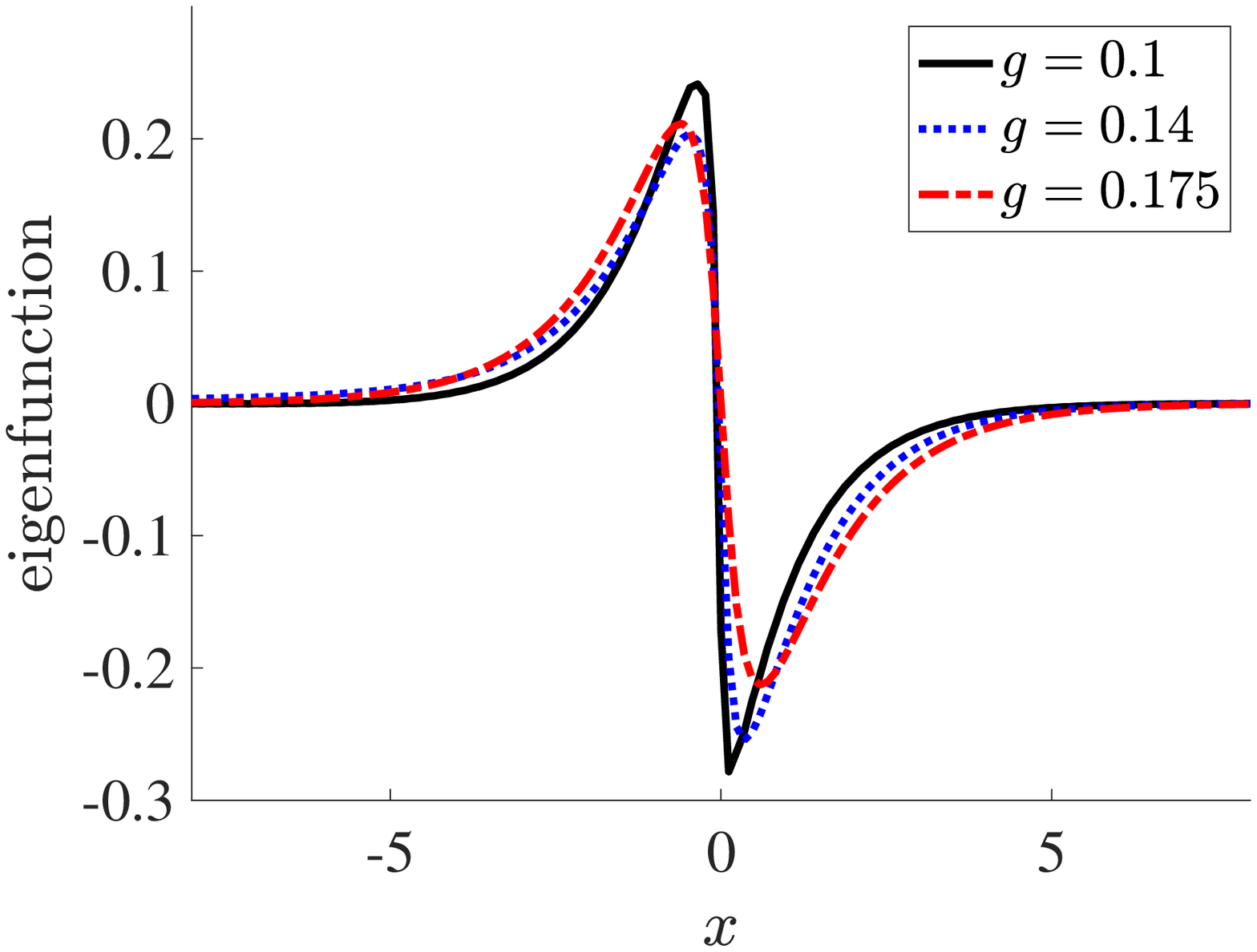} 
\end{center}
\caption{Smooth soliton solutions to Eq.~\eqref{Guosolitoneq}, for decreasing values 
of $g$ (top left). Maximum real part of spectrum versus parameter $g$ (top right) and log 
of maximum real part of spectrum versus the parameter $g$ (bottom left) for varying numbers 
$N$ of Fourier grid points. Eigenfunctions associated with the respective positive real 
eigenvalue for varying $g$ (bottom right). Similar to Fig.~\ref{leftoneig}, Fourier spectral 
methods and periodic boundary conditions have been used herein for $b = 1$.}
\label{Guosolitons}
\end{figure}

Using the same techniques as above, we can numerically compute the spectrum of these smooth
solitons. For $b = 1$, the maximum real part of the spectrum is of order $10^{-13}$ for 
sufficiently large $g$ (Figure \ref{Guosolitons}, bottom left), which suggests that the 
spectrum is purely imaginary for that parameter regime. When $g$ decreases below a threshold 
value, numerical spectral computation suggests the presence of an eigenvalue with positive 
real part. This threshold, however, is lower as the number of Fourier modes in the discretization 
is increased. Furthermore, the eigenfunction associated with this eigenvalue resembles the 
derivative of the smooth soliton (Figure \ref{Guosolitons}, bottom right), and becomes increasingly 
singular as $g$ decreases. Since the derivative is an eigenfunction with eigenvalue $0$ due to
translation invariance, this positive real eigenvalue is most likely an artifact resulting from 
the fact that the solution we are linearizing around becomes increasingly non-smooth as $g$ 
decreases, and this occurs sooner for coarser discretizations. Similar results are obtained 
for values of $b$ between $1$ and $2$. Thus, we conclude that for sufficiently large $g$ 
(i.e. $g>0.15$; cf.~Fig. \ref{Guosolitons}), the solutions of~\cite{Guo05} are spectrally stable; 
yet, as $g$ approaches $0$, we are no longer able to provide definitive spectral conclusions for 
the stability of the non-smooth peakon solutions, although the above interpretation of our spectral 
computations (corroborated by dynamical simulations given below) is suggestive of their robustness.

\subsection{Numerical timestepping}
We now turn our focus to spatio-temporal dynamics of the $b$-family of peakon
equations Eq.~\eqref{bfamily}. For our subsequent analysis, we will 
consider Gaussian initial data of the form of
\begin{equation}
u(x,t=0)\doteq \frac{1}{\sigma\sqrt{\pi}}e^{-\frac{\left(x-x_{0}\right)^2}{\sigma^{2}}},
\label{gaussian}
\end{equation}
where $\sigma$ and $x_{0}$ correspond to the width and center of the Gaussian
pulse, respectively. The previous works of~\cite{Holm03a,Holm03,FringHolm} considered 
the so-called $m$-formulation 
\begin{align}
m_{t}=-um_{x}-bu_{x}m, \quad m\doteq u-u_{xx},
\label{m_form}
\end{align}
which we adopt from now on, and the numerical scheme we employed in this work
is discussed next. We advance Eq.~\eqref{m_form} forward in time with the initial 
data of Eq.~\eqref{gaussian} by using Fourier spectral collocation for the spatial 
discretization supplemented by periodic boundary conditions on $[0,200]$, and the 
Runge-Kutta-Fehlberg (RKF45) for the time marching. The latter is a predictor-corrector 
method (with time step-size adaptation) where we used strict (absolute and relative) 
tolerances of $10^{-8}$ per time step. Then, at each time step, the field $u$ is obtained
from $m$ by inverting the Helmholtz operator $1-\partial_{x}^{2}$ in Fourier space. We 
should mention in passing that the time integration is performed in Fourier space as 
well. We remove the aliasing errors by employing the so-called $3/2$-rule in order to 
ensure that the high wavenumber Fourier coefficients are well decayed (see, e.g., Ref.~ \cite{jb2001}). 
However, we do not employ artificial viscosity as opposed to the works of~\cite{Holm03a,Holm03,FringHolm}. 
This way, it is expected that the numerical results reported herein are close representations 
of the original physical system.

A series of benchmarks of the numerical scheme is discussed in the Appendix~\ref{append_bench}.
In particular, using the initial data of Eq.~\eqref{gaussian}, selected cases of 
spatio-temporal dynamics in $b$ are presented giving rise to peakons, leftons as 
well as ramp-cliffs, and the results discussed therein are connected with the current 
literature. For example (see also Figs.~\ref{fig2}-\ref{fig4}), when $b<-1$, we observe 
the emergence of solitary pulses from Gaussian initial data [cf. Eq.~\eqref{gaussian}] 
that move to the left, gradually asymptoting to a steady-state solution, i.e., leftons 
[cf. Eq.~\eqref{lefton}]. It should be noted in passing that the number of leftons depends 
on how close or far away the selected value of $b$ is from $-1$, e.g., we observed the emergence 
of three and two leftons for $b=-2$ and $b=-1.5$, respectively (see the Appendix~\ref{append_bench} 
for a detailed discussion on leftons).

\begin{figure}[pt!]
\begin{center}
\includegraphics[height=.23\textheight, angle =0]{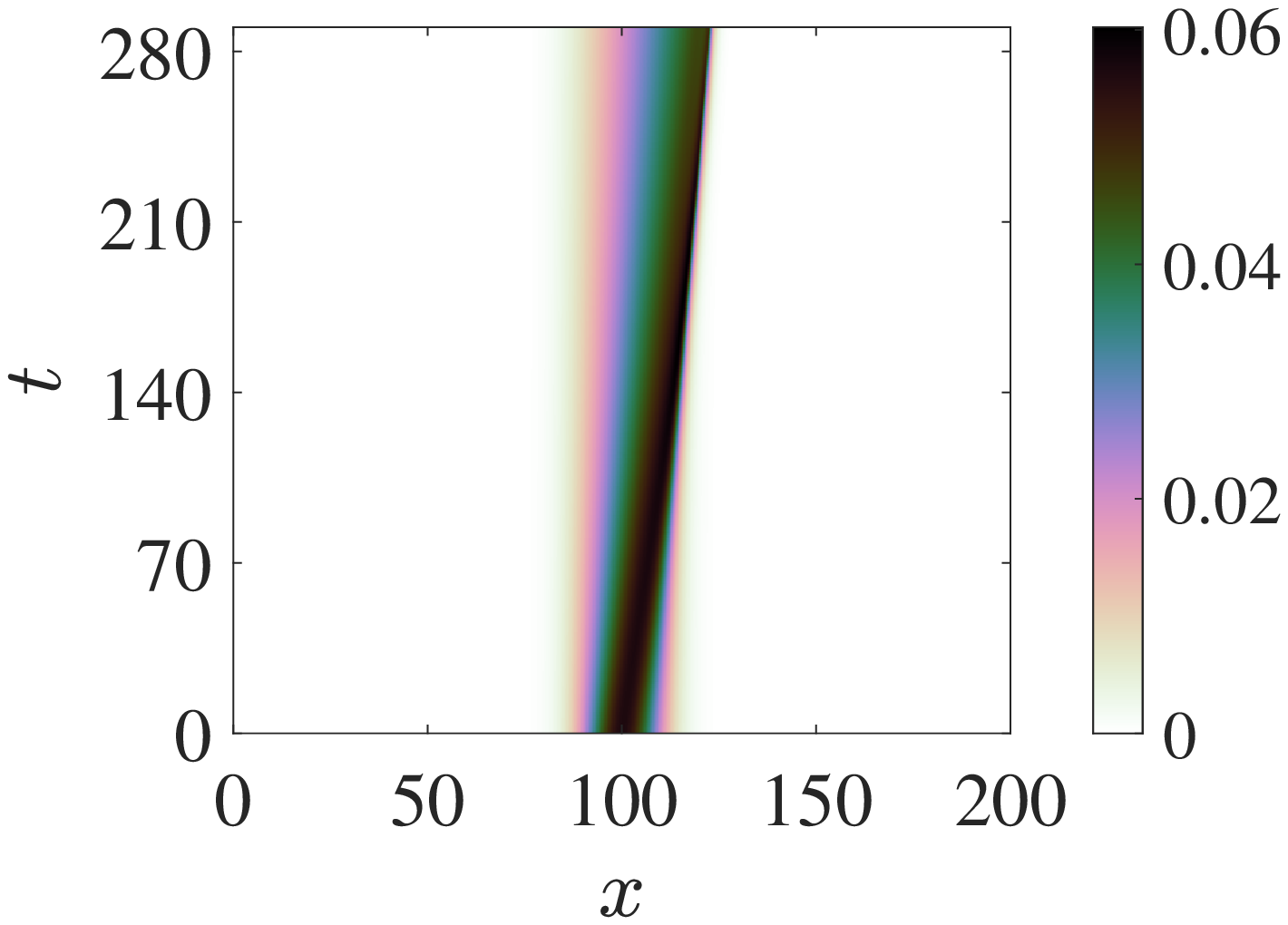}
\includegraphics[height=.23\textheight, angle =0]{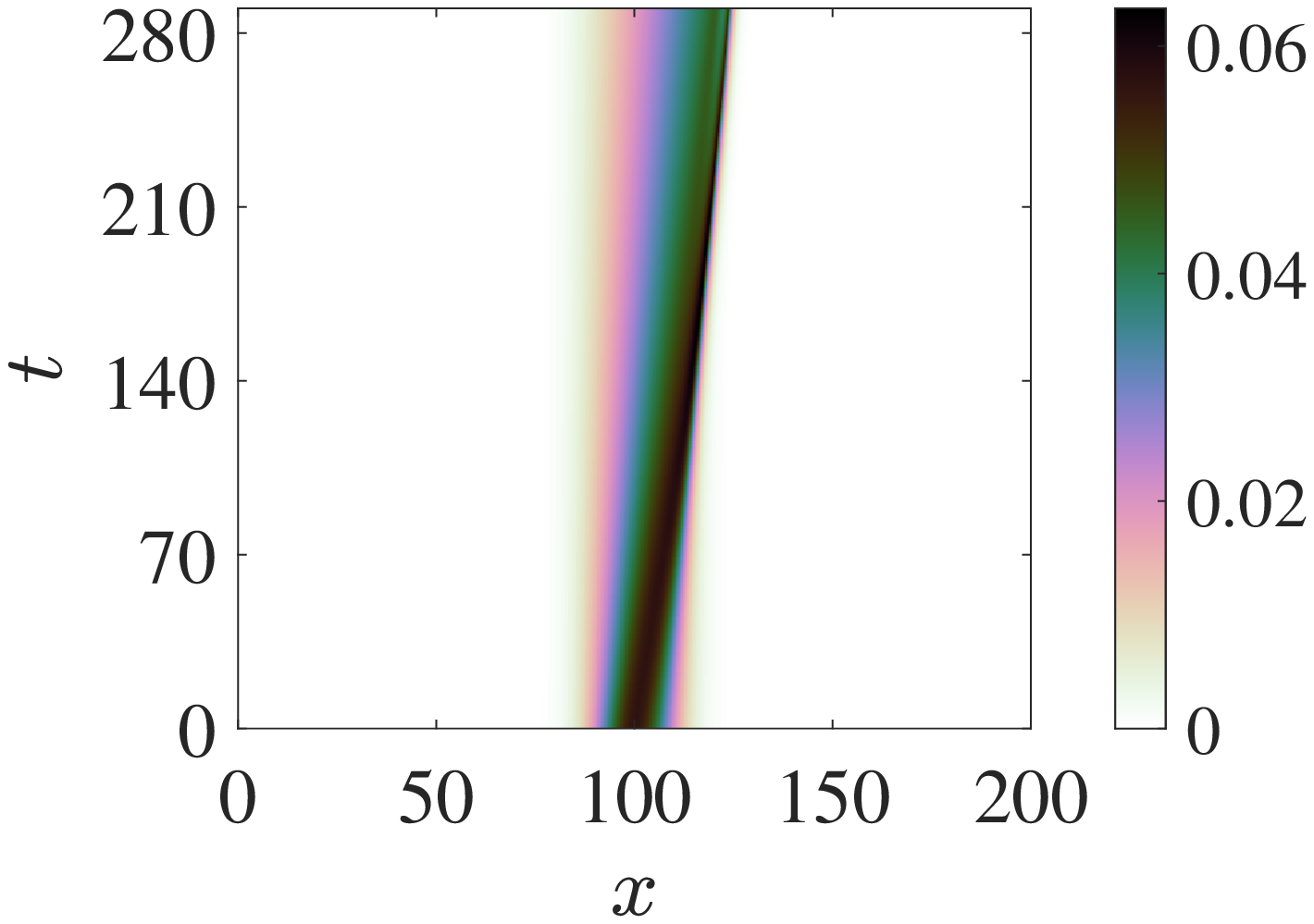}
\end{center}
\caption{Contour plots of spatio-temporal evolution of ramp-cliff solutions
generated by Gaussian initial data with $\sigma=10$ and $x_{0}=100$, and
$N=32768$ Fourier modes. The left and right panels correspond to $b=0.8$ 
and $b=0.99$, respectively.
}
\label{fig_RC}
\end{figure}

We now turn our focus on the ramp-cliff regime corresponding to the case 
when $b\in (-1,1)$. In the Appendix~\ref{append_bench}, we present 4 cases 
of ramp-cliffs where the latter travel faster for gradually increasing values 
of $b$. We can clearly observe the formation of these patterns and the self-similar 
expansion of their rear tails, while at the same time their front part steepens. 
It is worth noting here that we are not aware of a frame where such solutions 
can be considered as steady. However, we report at this point an artifact that 
was observed in our numerical simulations with $N=16384$ collocation points and 
interval of time of integration $t\in[0,3000]$. One would expect the emergence of 
ramp-cliffs propagating to the right of the computational domain. Nevertheless, 
for $b\gtrapprox 0.85$ we noticed that peakons were emitted from the ramp-cliffs, 
with the former emerging as robust traveling waves. We investigated this byproduct 
of the numerical scheme by considering the implications of Theorem 3 in~\cite{Hone14}. 
In particular, it can be shown that if $m(x,t=0)>0$, then $m(x,t)>0$, $\forall t>0$ 
holds which in fact is the case as per the Gaussian initial data employed in this work. 

\begin{figure}[!pt]
\begin{center}
\includegraphics[height=.23\textheight, angle =0]{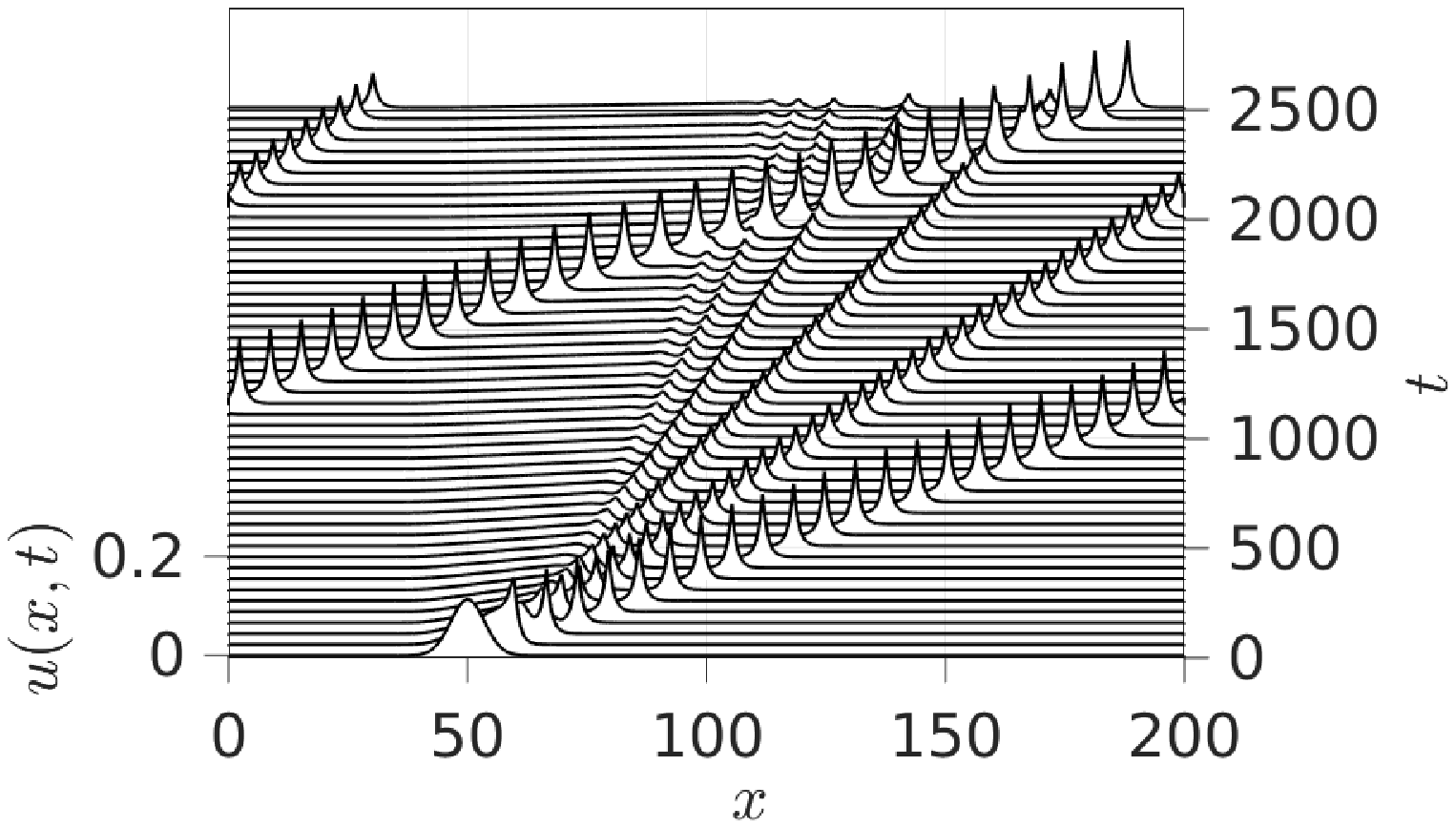}
\hskip -0.5cm
\includegraphics[height=.23\textheight, angle =0]{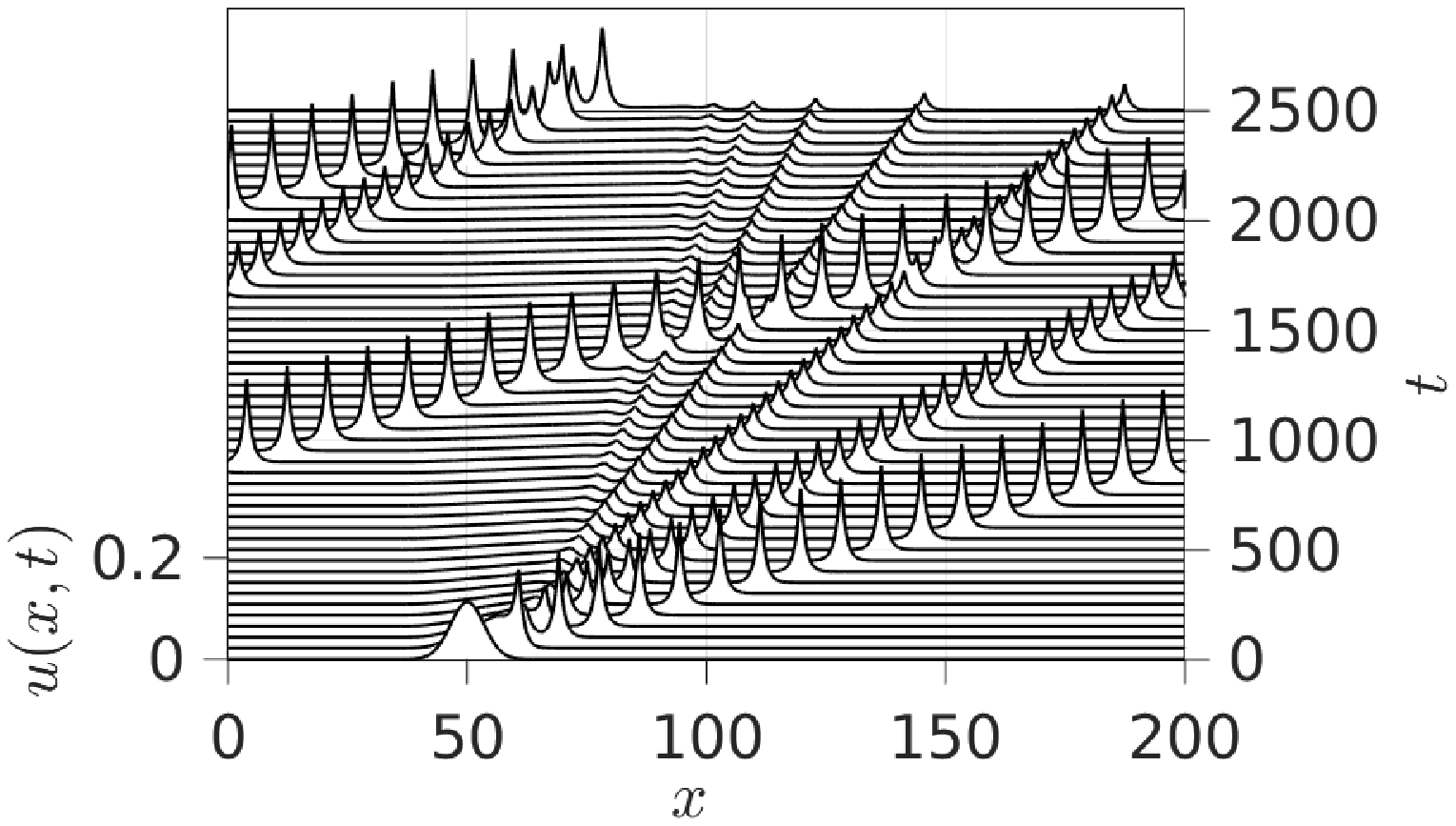}\\
\includegraphics[height=.23\textheight, angle =0]{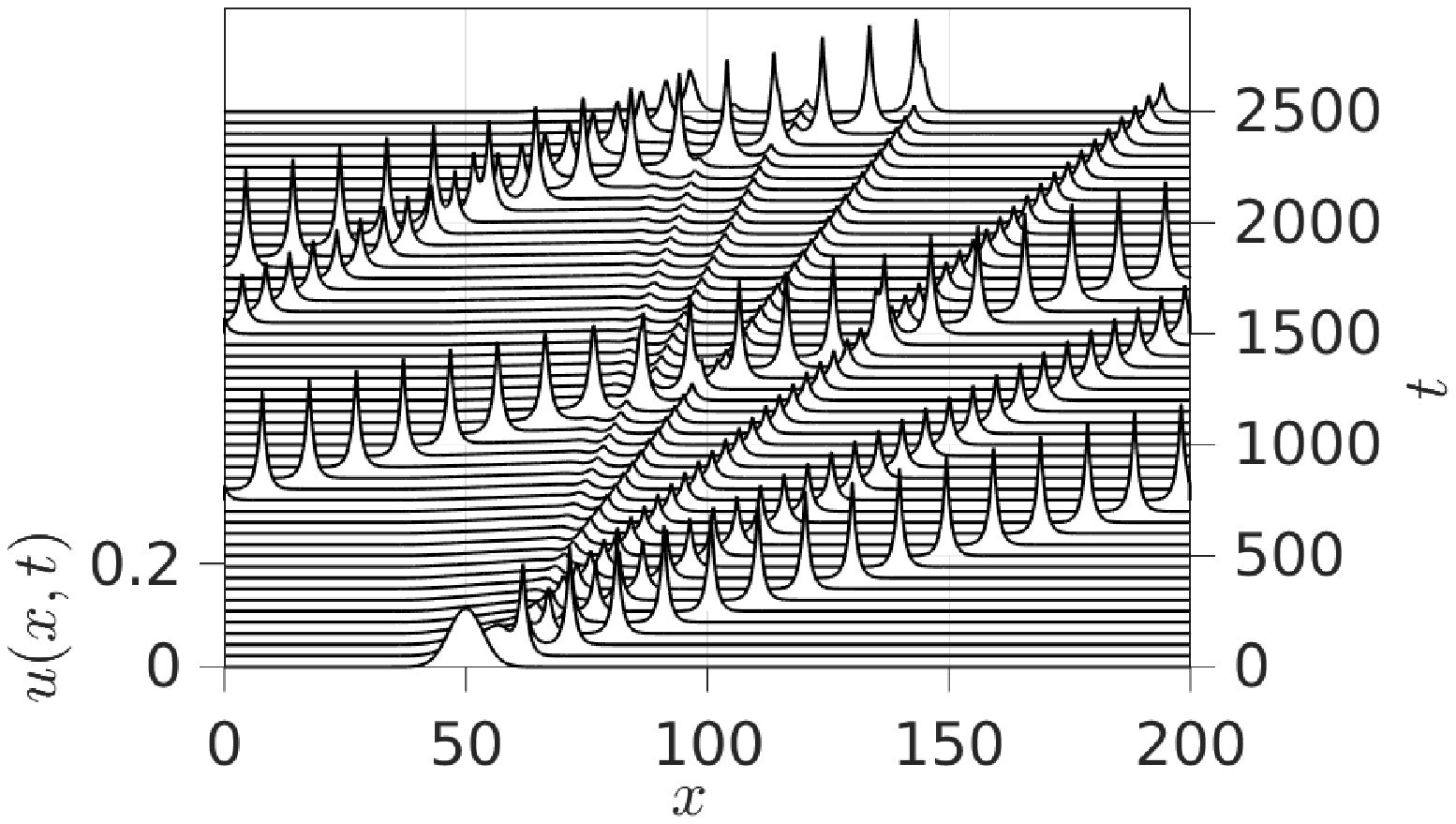}
\hskip -0.5cm
\includegraphics[height=.23\textheight, angle =0]{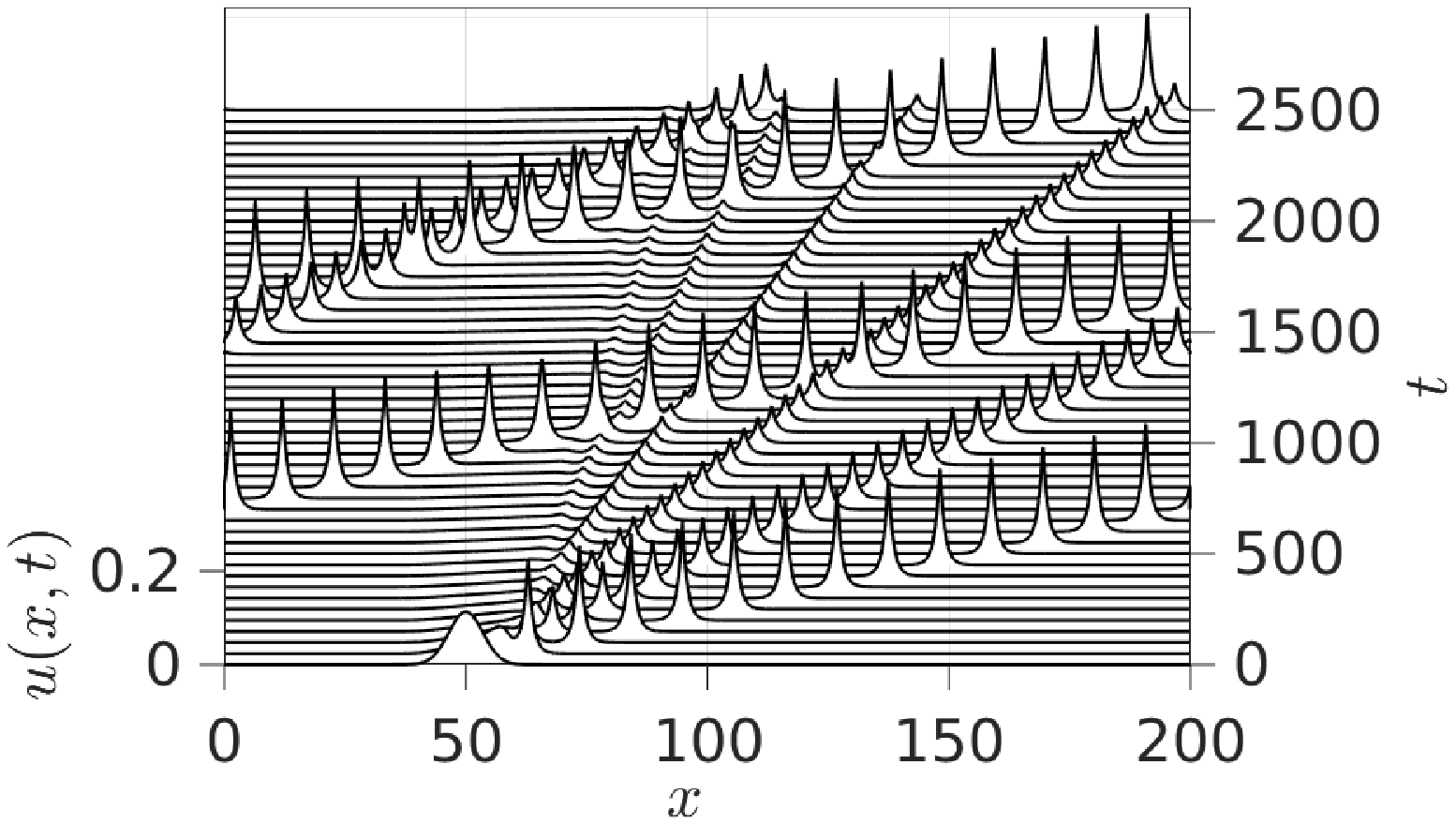}\\
\end{center}
\caption{
Same as Fig.~\ref{fig3} but for Gaussian initial data with $\sigma=5$ (and
$x_{0}=50$) and $N=8192$ Fourier modes. Top left and right panels correspond 
to values of $b$ of $b=1.5$ and $b=2$ (CH) whereas the bottom left and right 
ones to values  of $b=2.5$ and $b=3$ (DP), respectively.
}
\label{fig6}
\end{figure}

Upon a careful inspection of the temporal evolution of the variable $m$, we 
noticed that it becomes negative past a time $t_{0}$, thus suggesting that 
one cannot continue the temporal integration beyond that time (due to the
numerical scheme violating a theoretically established constraint). Moreover, 
we performed a spatial grid refinement by increasing the number of collocation 
points to $N=32768$ in order to investigate further the dependence of $t_{0}$ 
on $N$. We still observed the emergence of such ``spurious'' peakons but their 
appearance was delayed in time. This finding is somewhat expected: in this computation, 
we keep our spatial domain $[0,200]$ fixed during the spatial grid refinement 
which implies that the wavenumbers are still multiples of $k=2\pi /L$. Thus, when 
the number of collocation points is increased, the numerical scheme resolves 
progressively better the large wavenumbers which, in turn, results in the time 
delay of the emergence of those ``spurious'' peakons. It is expected that if we 
increase the number of nodes to, e.g., $N=65536$, this artifact will gradually disappear. 
As case examples of ramp-cliffs (in addition to the ones shown in Fig.~\ref{fig5} in 
Appendix~\ref{append_bench}), we demonstrate two cases with $b=0.8$ and $b=0.99$ 
in Fig.~\ref{fig_RC} where we stopped the integrator at $t \approx 290$ (past that
time, we observed the non-positivity of the $m$ variable).

We now investigate the peakon regime of the $b$-family, i.e.,when $b>1$. In 
particular, Fig.~\ref{fig6} presents selective cases of numerical simulations 
based on Gaussian initial data with $\sigma=5$ and $x_{0}=50$, and $N=8192$ 
Fourier modes. The top left and right panels correspond to the cases with $b=1.5$ 
and $b=2$ (CH) whereas the bottom left and right to values of $b$ of $b=2.5$ and 
$b=3$ (DP), respectively. The emergence of sharply peaked waves can be discerned 
from these panels where the initial Gaussian pulse breaks into peakons as time 
progresses. Furthermore, the time when the first peakon emerges in the simulations 
depends on the value of $b$, that is, its emergence is ``delayed'' when $b$ is close
to $1$. However, when the value $b$ is further away from that limit, the first peakon 
emerges at earlier times together with secondary peakons of smaller amplitude traveling
across the computational grid. It should be noted also that the first peakon (having 
actually the largest amplitude) travels in the computational grid and undergoes nearly 
elastic collisions with other peakons of smaller amplitude. Such phenomenology is interesting 
in its own right and deserves further study, however it is beyond the scope of the present 
work.

We finally focus on Theorem~\ref{im} (see Section~\ref{main_res}) which suggests 
that the point spectrum contains positive eigenvalues for $b<1$, that is, the 
peakons are orbitally unstable for $b<1$. We explore this theoretical finding 
numerically by considering a peakon centered at $x_{0}=50$ with speed (or amplitude) 
$c\approx 0.031$, and $N=32768$ collocation points. The left and right panels of 
the top row of Figure~\ref{peakon_stab} present our numerical results for values 
of $b$ of $b=0.88$ (left panel) and $b=0.98$ (right panel), respectively. It can be 
discerned from both panels that the peakons are orbitally unstable. The amplitude of 
the initial profile ($t_{0}=0$) gradually increases over time eventually leading to 
a collapse of the waveform (in particular, past $t_{0}\approx 130$ for the spatial 
discretization employed herein). 

On the other hand, i.e., when $b>1$, we expect peakons to be orbitally stable.
Indeed, this is the case as is shown in the middle and bottom panels of Fig.~\ref{peakon_stab}.
In particular, the middle and bottom panels showcase profiles of peakons at 
$t_{0}=0$ and $t_{0}=3000$ (terminal time of integration) for $b=1.3$ and $b=1.5$,
respectively (the same initial condition was used in both cases as in the top 
row of Fig.~\ref{peakon_stab}). It can be discerned from both panels that peakons 
appear to be robust over the time integration. However, a couple of remarks are in 
order at this point and in line with the middle and bottom panels of Fig.~\ref{peakon_stab}. 
We observe a small in-amplitude yet stationary localized error at the vicinity
of the center ($x_{0}=50$) of the initially placed peakon. It has been argued 
in~\cite{mitso2019} that when non-smooth initial data are considered in an 
evolution numerical experiment (such as peakons in the $b$-family), localized 
errors are expected to be formed in the vicinity of $x_{0}$ initially that remain 
stationary in time. This is the case in both panels of Fig.~\ref{peakon_stab}
and it is expected that this error gradually diminishes with grid refinement 
(see~\cite{mitso2019}). However, this error results in a slightly larger amplitude 
(and thus speed) of the pertinent peakon waveform but after a ``transient'' 
period of time it remains constant over the time evolution, as this can be seen
in the insets of the panels. Indicatively, the location of the peakon after $3000$ 
time units in the bottom panel (i.e., for $b=1.5$) is found to be at $x\approx 145.4$ 
whereas the theoretical expectation is $\approx 143.1$, thus suggesting a (relative) 
error of $\approx 1.6\%$. Despite this artifact, peakons for $b>1$ appear to be highly
robust and these findings are in accordance with Theorem~\ref{im}.

\begin{figure}[pt!]
\begin{center}
\includegraphics[height=.23\textheight, angle =0]{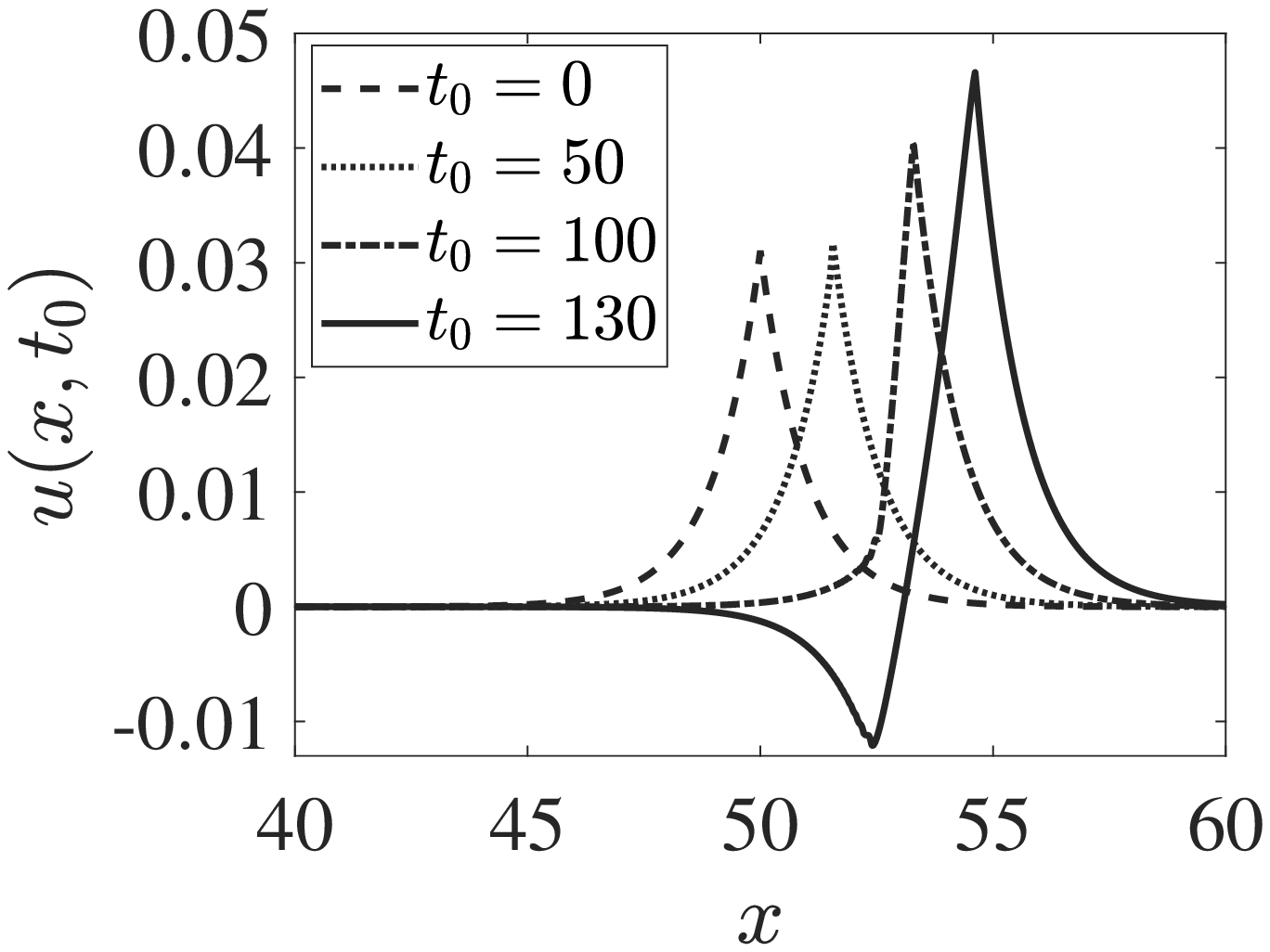}
\hskip -0.5cm
\includegraphics[height=.23\textheight, angle =0]{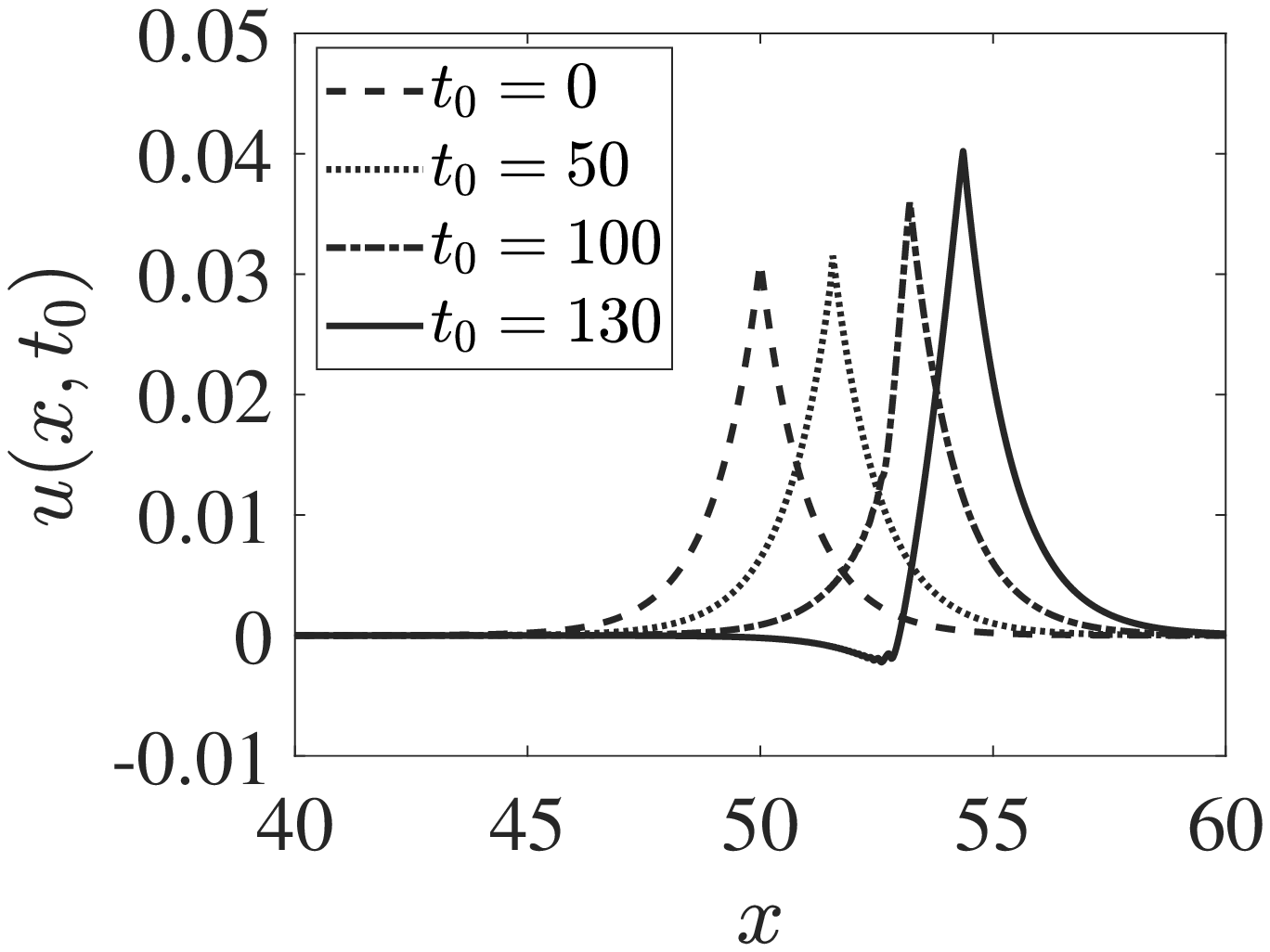}\\
\hskip 0.3cm
\includegraphics[height=.17\textheight, angle =0]{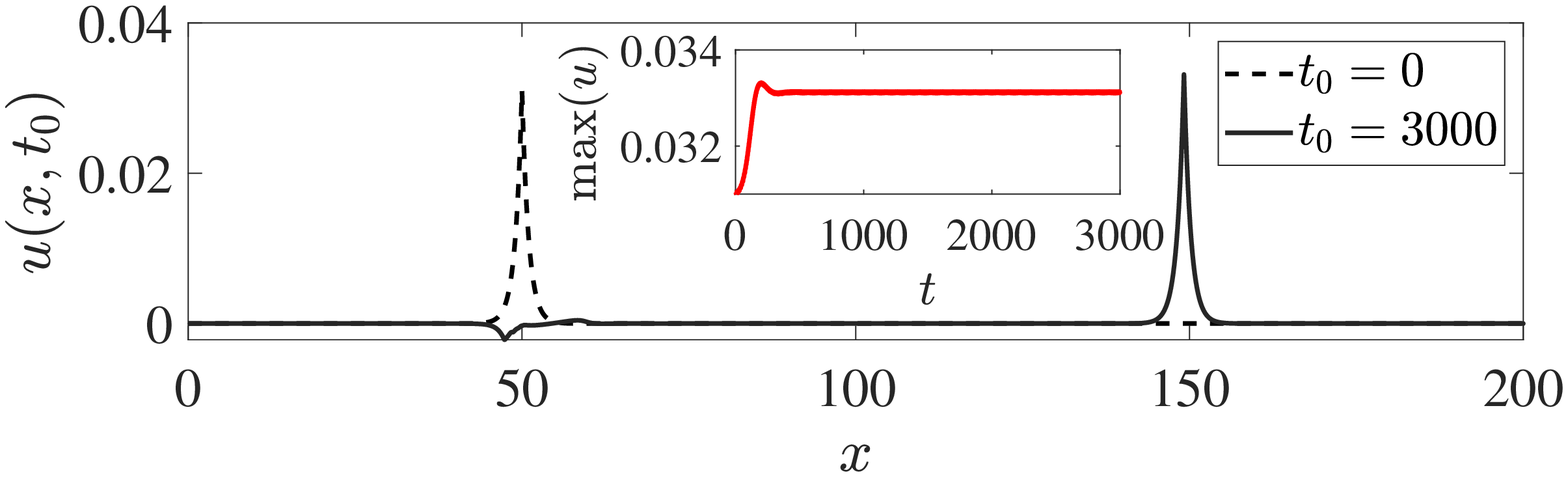}\\
\hskip 0.3cm
\includegraphics[height=.17\textheight, angle =0]{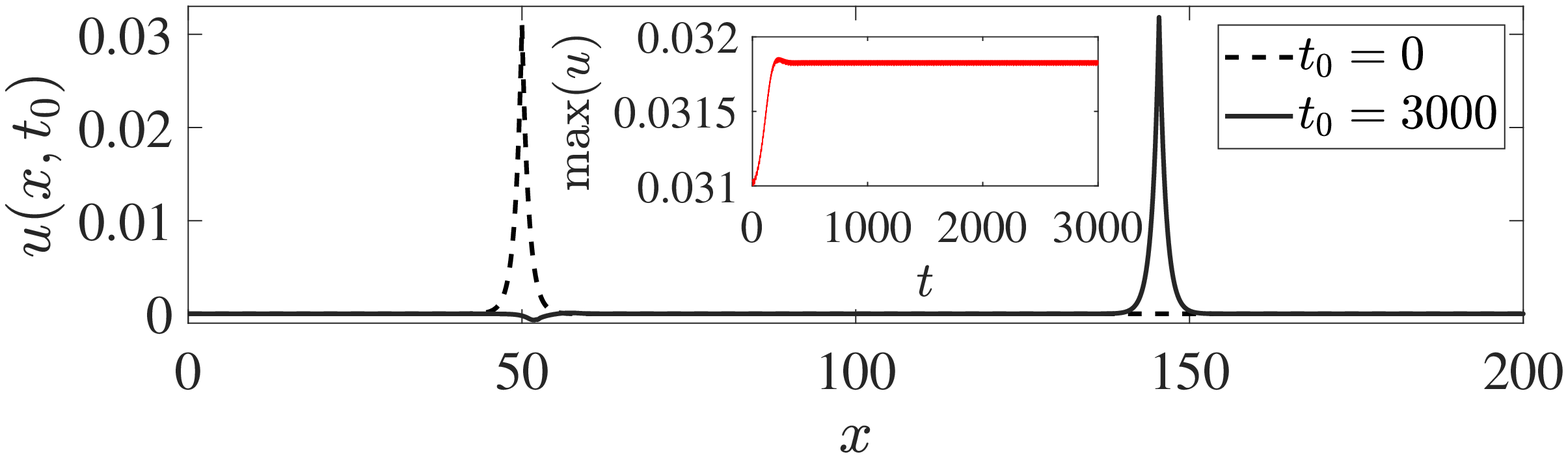}
\end{center}
\caption{
\textit{Top row}: The emergence of the instability for peakon solutions to the
$b$-family. Left and right panels present snapshots of peakon solutions at various
times $t_{0}$ (see the legend therein) for $b=0.88$ and $b=0.98$, respectively. 
\textit{Middle and bottom rows}: The stable regime $b>1$ for $b=1.3$ and $b=1.5$. 
The panel in the middle and bottom rows showcases a peakon solution at $t_{0}=0$ 
and $t_{0}=3000$ with dashed and solid black lines, respectively. The insets therein, 
demonstrate the amplitude of the peakon as a function of time with a solid red line. 
A peakon centered at $x_{0}=50$ with speed $c\approx 0.031$ is employed as an initial 
condition whereas the number of collocation points in all these panels is $N=32768$.
}
\label{peakon_stab}
\end{figure}

\section{Conclusions and Future Directions}
In the present work we have identified the solutions of the $b$-family
of peakon equations. We have provided some analytical insight on the
spectral problem, identifying the instability of the peakon waveforms 
via the consideration of their point spectrum. Indeed, we have indicated
that the latter contains eigenvalues with a positive real part. Our 
analytical insights have been corroborated by a diverse array of numerical 
computations. For structures that we could identify as steady, either in 
the original frame or in a co-traveling frame, we attempted to offer a 
complementary spectral picture. This was done in the case of the leftons 
for $b<-1$ which are stationary and were found to potentially be stable in 
this regime. On the other hand, in the regime $-1<b<1$, we could only perform 
dynamical simulations which illustrated the transient emergence and tendency 
towards breaking of ramp-cliff waveforms. The resulting formation of peakon 
structures (as $b \rightarrow 1$) was identified as a feature that disappears 
as the high wavenumbers become better resolved. However, the peakon structures 
become indeed dominant for $b>1$ where they spontaneously arise from smooth
initial conditions and robustly persist for different values of $b$, for 
integrable and non-integrable cases alike. Suggestive, although not definitive, 
towards their stability is the picture identified spectrally for the solutions 
on a finite background, tending towards these peakons as the background parameter 
$g$ tends to $0$.

While we believe that this study addresses some of the pending questions on this 
class of systems admittedly many more questions remain open and are worthwhile 
to explore in future studies. Is there a meaningful (and consistent with our theoretical 
analysis) way in which the peakon spectral analysis can be numerically performed? 
Is there a frame (possibly a self-similarly evolving one) where the ramp-cliff 
structures can be considered as steady and thus be spectrally analyzed? Are there
higher-dimensional analogues of these different structures and, if so, which of the 
above properties persist or disappear even in the two-spatial-dimension case?
These are only some among the numerous open questions. Work in these is currently 
underway and will be reported in future publications.


\section*{{\small Acknowledgments}}
PGK acknowledges support from the U.S.~National Science Foundation under
Grants no.~PHY-1602994 and DMS-1809074 (PGK). EGC is indebted to Hans 
Johnston (UMass) for endless support, discussions and guidance throughout 
this work. He thanks Darryl Holm (Imperial College) for pointing out 
Ref.~\cite{FringHolm} and express his gratitude to James (Mac) Hyman 
(Tulane University) for fruitful discussions during his visit at Los 
Alamos National Laboratory in 2019. He also express his gratitude to
Chi-Wang Shu (Brown University) for discussions about discontinuous 
Galerkin methods. SL acknowledges a Collaboration Grants for Mathematicians
from the Simons Foundation (award \# 420847).  SL also acknowledges discussions 
with Andrew Hone (University of Kent) and Simon Eveson (University of York).

\appendix

\section{Spatio-temporal dynamics: From peakons to Leftons and 
Ramp-Cliffs}
\label{append_bench}
We test our numerical scheme by re-producing a subset of the results of 
Refs.~\cite{dhh} and~\cite{Holm03a}. In particular, the left ($b=2$) and 
right ($b=3$) panels of Fig.~\ref{fig1} correspond to the spatio-temporal 
evolution of $u(x,t)$ by using Gaussian initial data [cf. Eq.~\eqref{gaussian}] 
with $\sigma=5$ and $x_{0}=100$, and $\sigma=5$ and $x_{0}=33$ respectively. 
Those results compare well with Figs 1 and 2 of Refs.~\cite{Holm03a} and~\cite{dhh},
respectively.

\begin{figure}[pt!]
\begin{center}
\includegraphics[height=.231\textheight, angle =0]{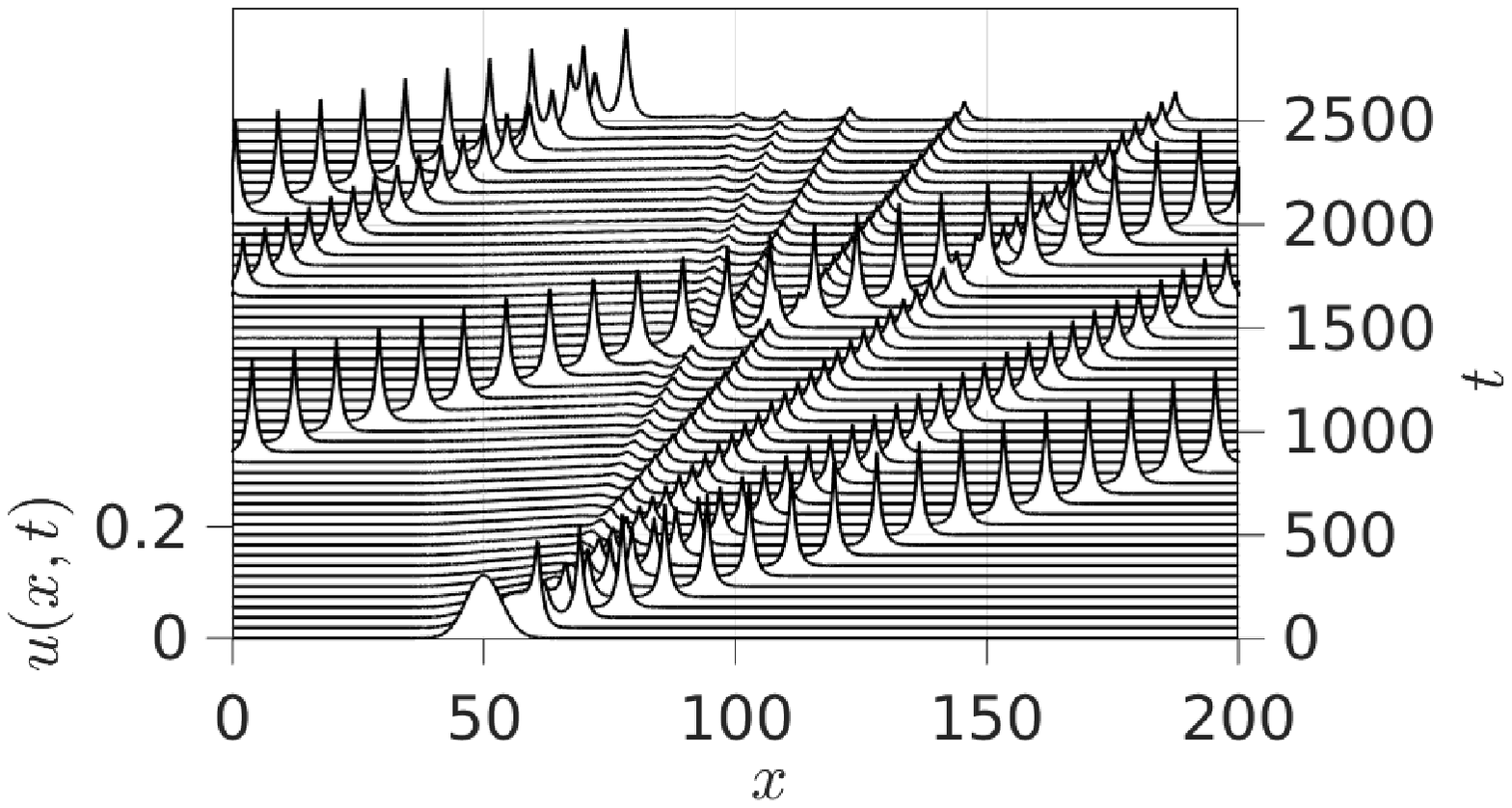}
\hskip -0.5cm
\includegraphics[height=.23\textheight, angle =0]{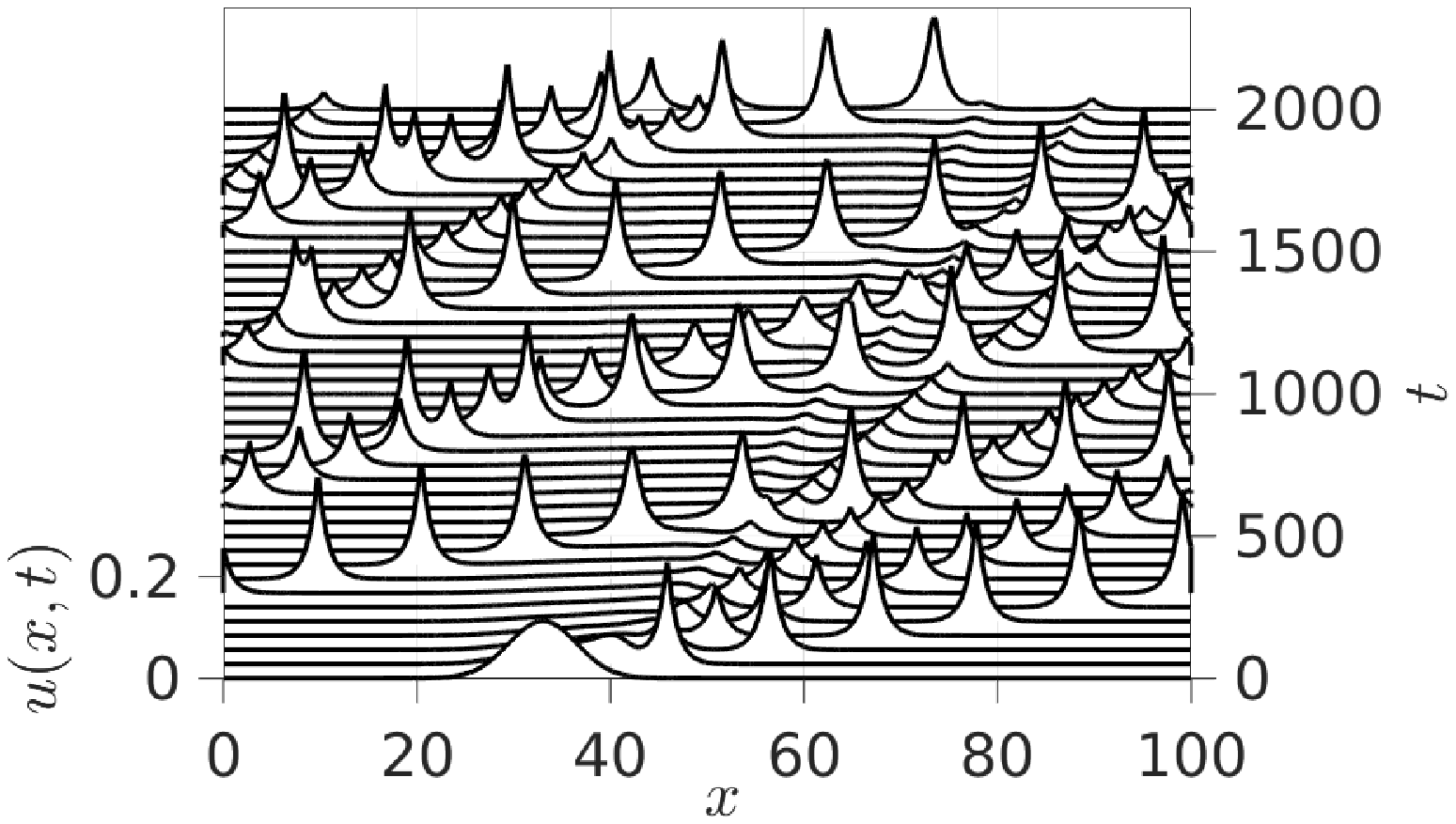}
\end{center}
\caption{
\textit{Left panel:}
Spatio-temporal evolution of a Gaussian profile with $\sigma=5$ 
centered at $x_{0}=50$ and $b=2$. Note that $N=8192$ Fourier
collocation points in space were used for this computation.
\textit{Right panel:}
Same as the left one but for Gaussian initial data with $x_{0}=33$
and $b=3$ (and same width, i.e., $\sigma=5$). Here, $N=4096$ 
collocation points were used.
}
\label{fig1}
\end{figure}
Next, we focus on the regime $b<-1$. In particular, Figs.~\ref{fig2}-\ref{fig4} 
highlight numerical results on the \textit{lefton} regime [cf. Eq.~\eqref{lefton}] 
by considering various values of $b$ (with $N=8192$ Fourier modes). In particular, 
Fig.~\ref{fig2} presents the spatio-temporal evolution of $u(x,t)$ for the cases 
with $b=-3$ (top left panel), $b=-2.5$ (top right panel), $b=-2$ (bottom left panel), 
and $b=-1.5$ (bottom right panel), respectively, when Gaussian initial data are 
employed with $\sigma=10$ and $x_{0}=100$. The emergence of leftons is clearly 
evident in all those panels and we notice the appearance of more leftons when 
$b (<-1)$ is larger in its absolute value (notice the appearance of four leftons 
in the top left and right panels whereas the bottom left and right ones contain 
three and two, respectively). We further investigated the emergence of leftons by 
considering different values of the Gaussian's width and center. Specifically, 
Fig.~\ref{fig3} presents results with $\sigma=7$ (and $x_{0}=100$) where the number 
of leftons decreases as $b$ approaches $-1$. 

Fig.~\ref{fig4} compares the numerically obtained (stationary) solution of the top 
right panel of Fig.~\ref{fig3} with Eq.~\eqref{lefton}. It should be noted that this
result is the analogue of Fig. 6 in~\cite{Holm03a}. In the present case (with $b=-2.5$), 
three leftons appear at the terminal time of the evolution ($t=2500$) whose locations 
and amplitudes are computed. Then, those values are plugged into Eq.~\eqref{lefton} and
are plotted with stars, crosses and plus signs in Fig.~\ref{fig4}. A perfect match can 
be clearly discerned, thus suggesting the accuracy and high-fidelity of the numerical 
scheme employed in this work.

\begin{figure}[pt!]
\begin{center}
\includegraphics[height=.23\textheight, angle =0]{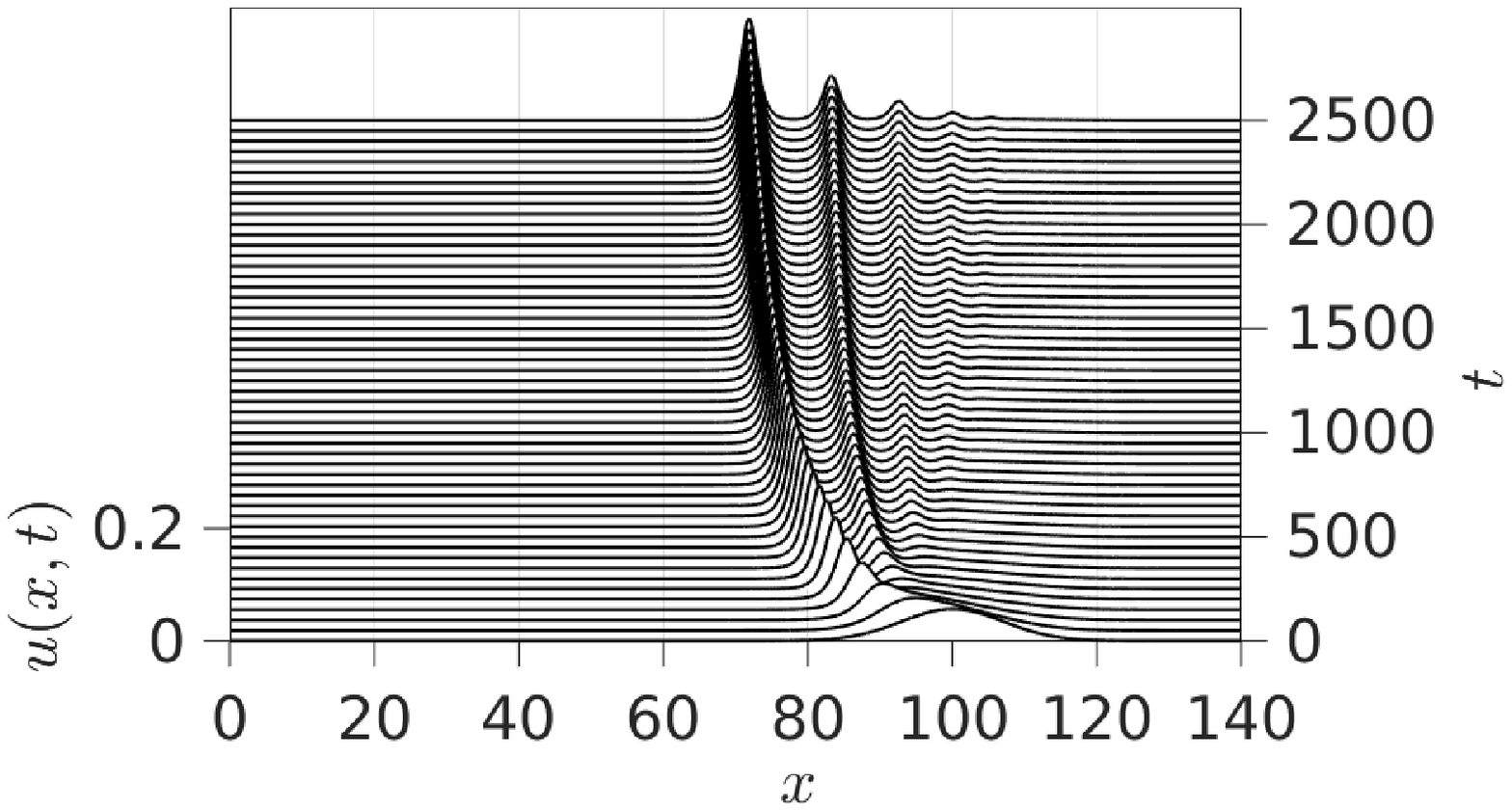}
\hskip -0.5cm
\includegraphics[height=.23\textheight, angle =0]{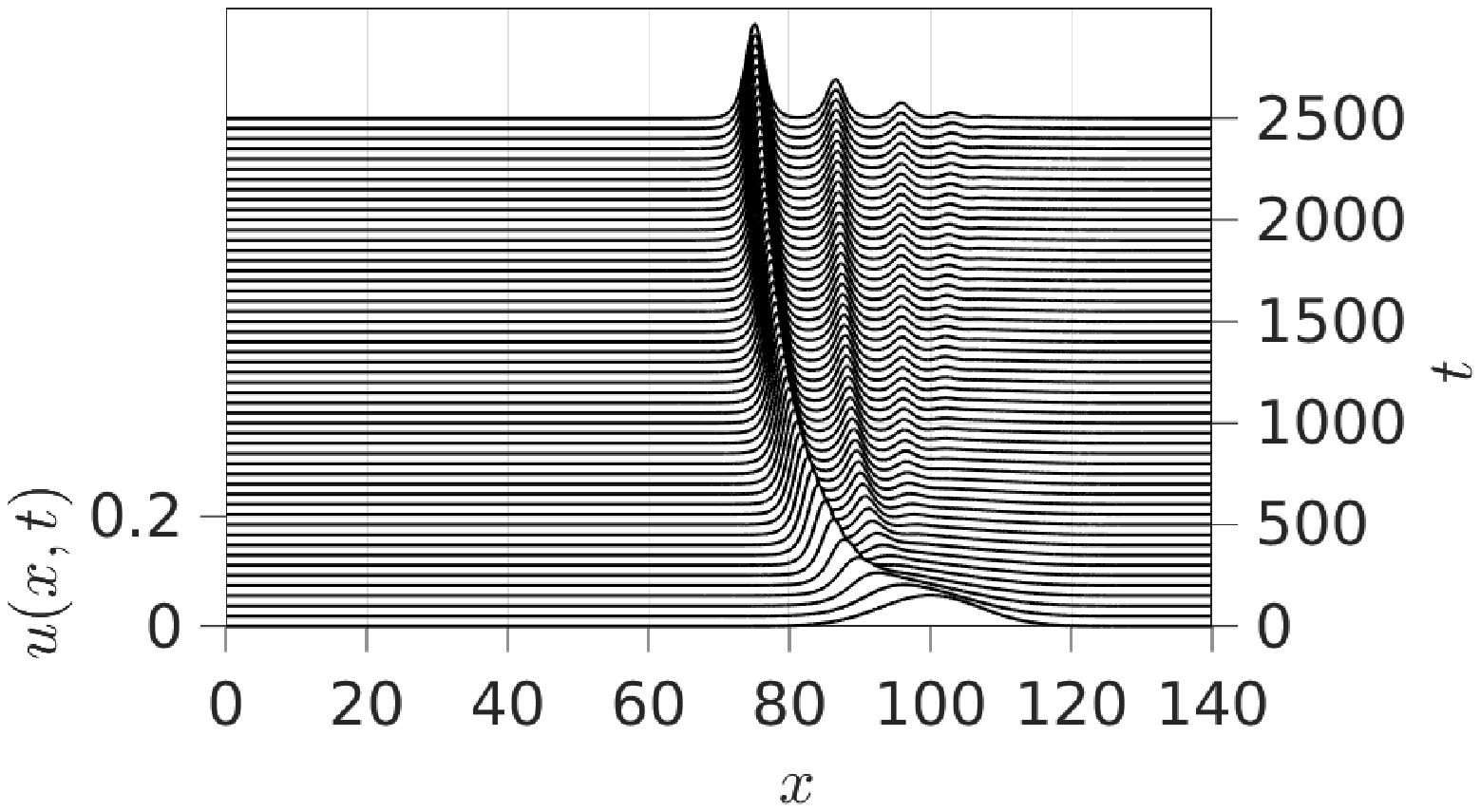}\\
\includegraphics[height=.23\textheight, angle =0]{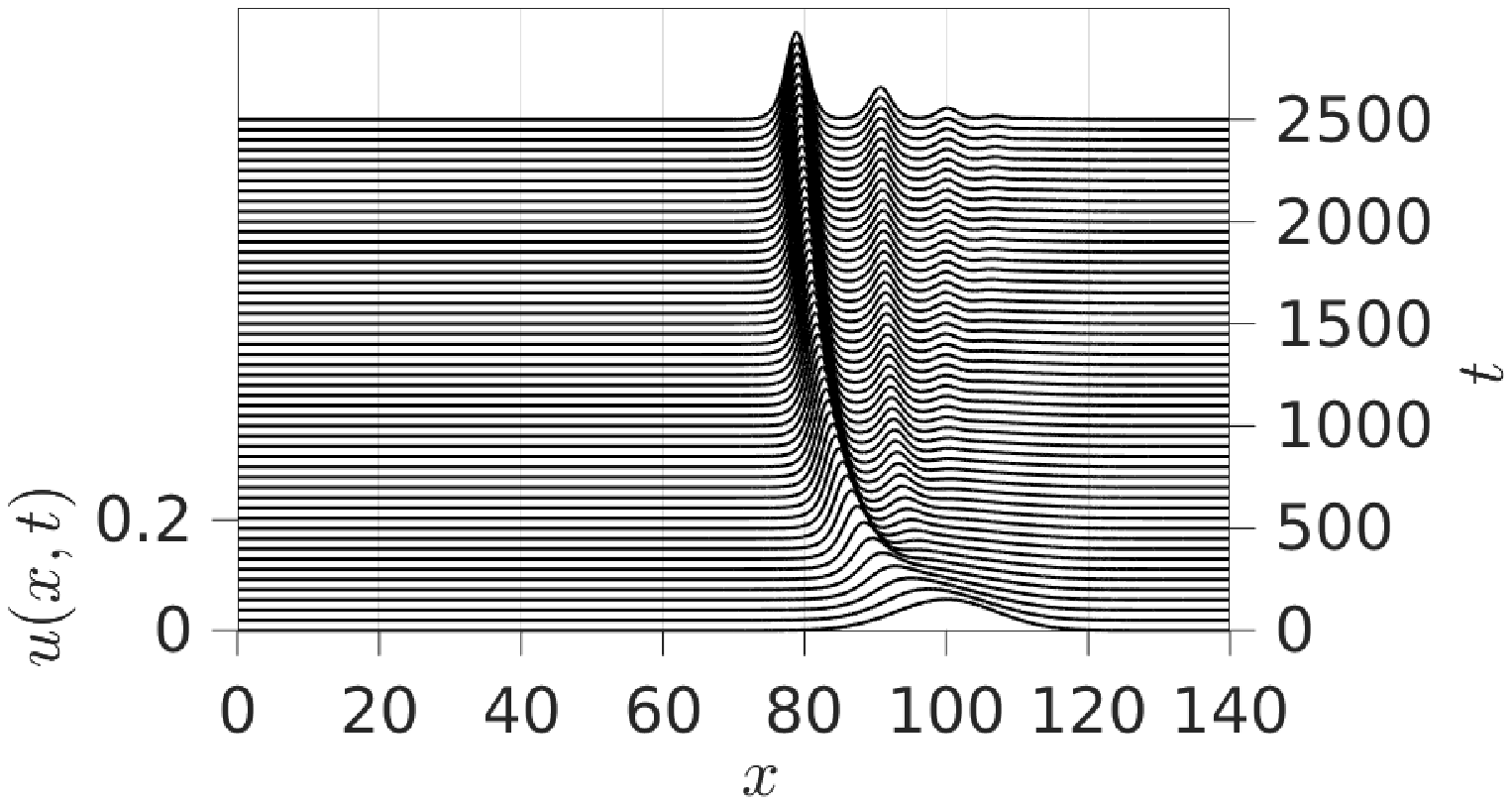}
\hskip -0.5cm
\includegraphics[height=.23\textheight, angle =0]{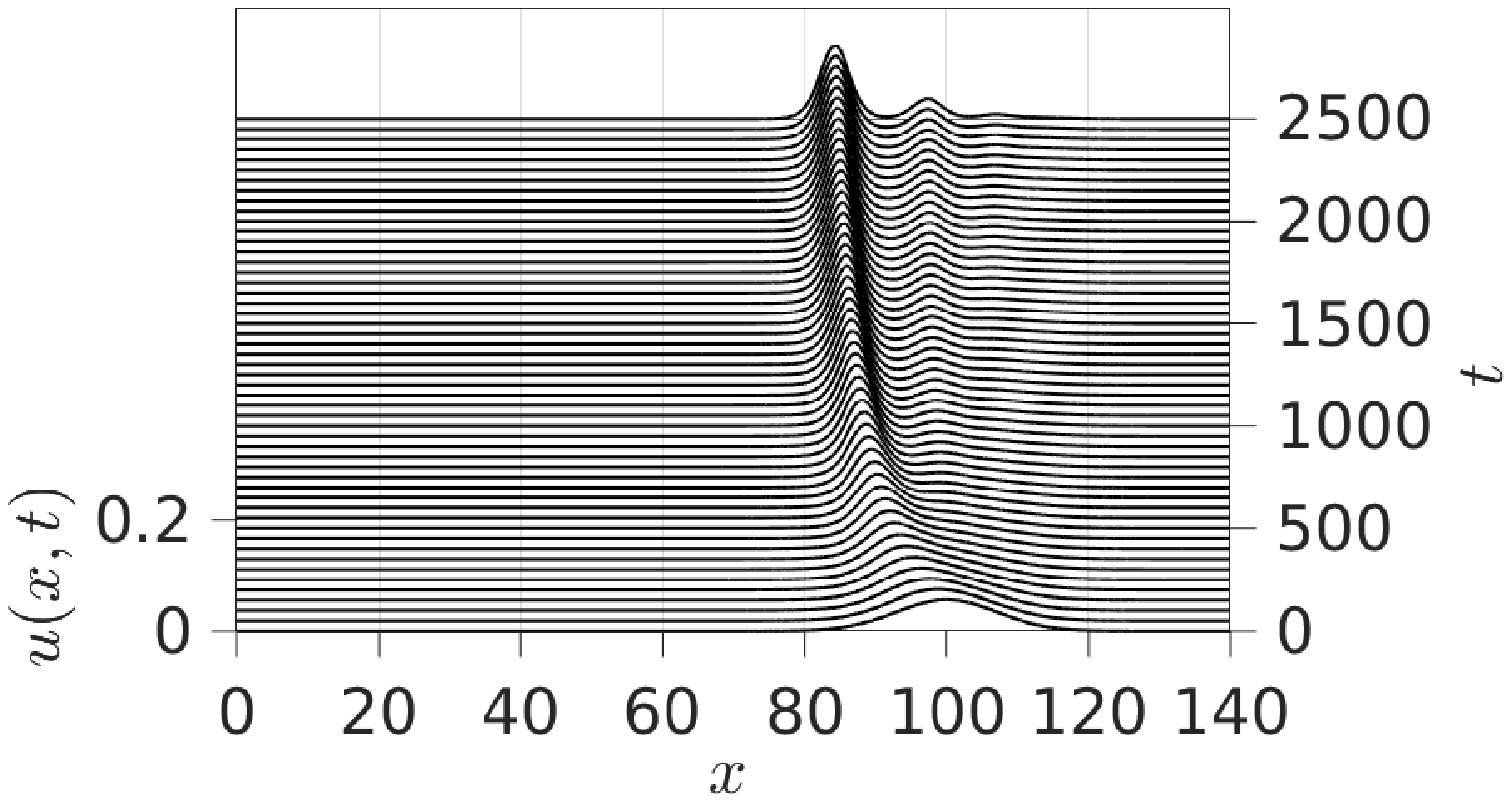}
\end{center}
\caption{
Results on numerical simulations using $N=8192$ Fourier collocation points.
In particular, a Gaussian pulse centered at $x_{0}=100$ with $\sigma=10$ was
used as an initial condition to the $b$-family. Top left and right panels
correspond to values of $b$ of $b=-3$ and $b=-2.5$ whereas the bottom left
and right ones to values of $b$ of $b=-2$ and $b=-1.5$, respectively.
}
\label{fig2}
\end{figure}
\begin{figure}[pt!]
\begin{center}
\includegraphics[height=.23\textheight, angle =0]{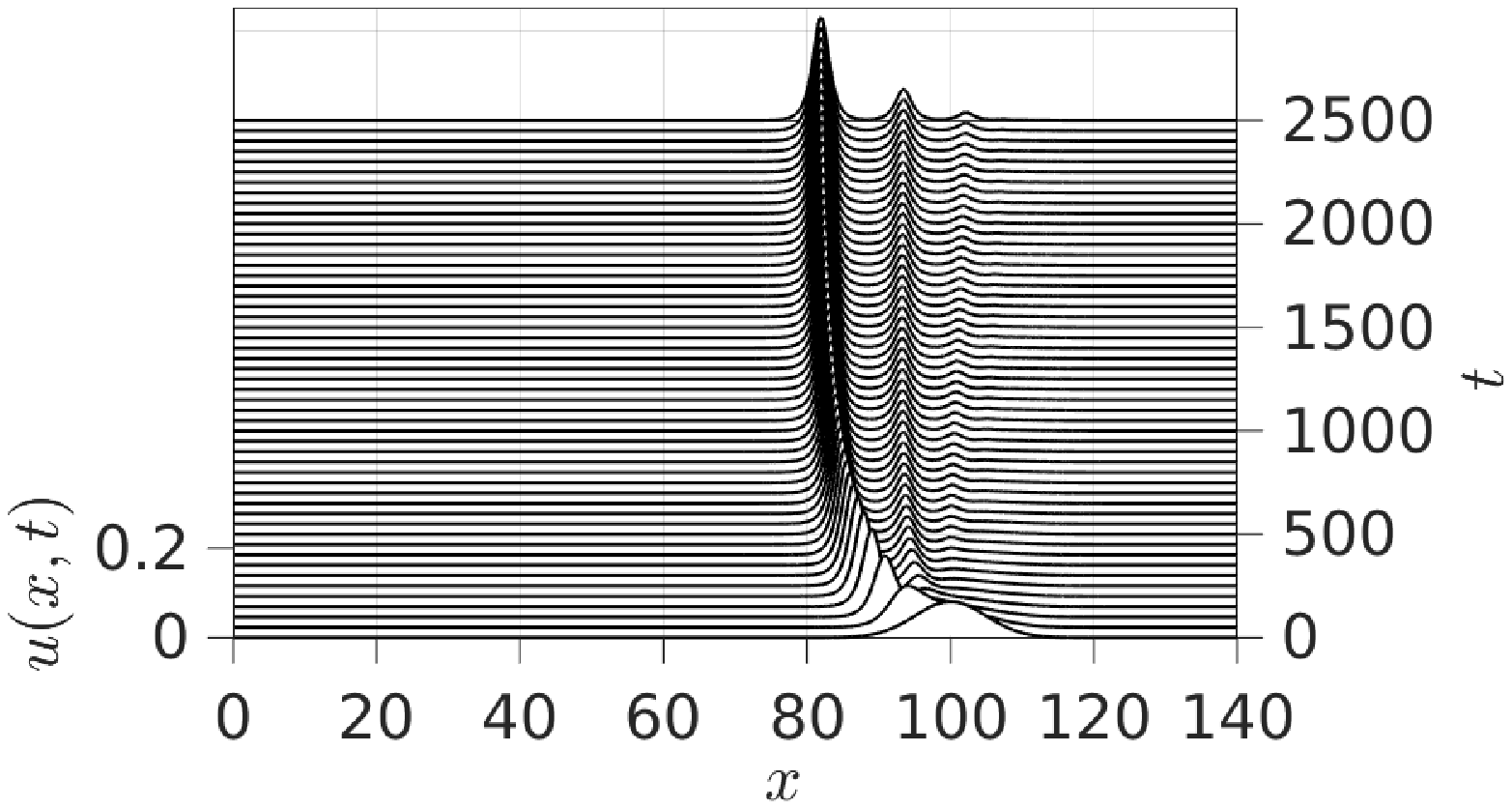}
\hskip -0.5cm
\includegraphics[height=.23\textheight, angle =0]{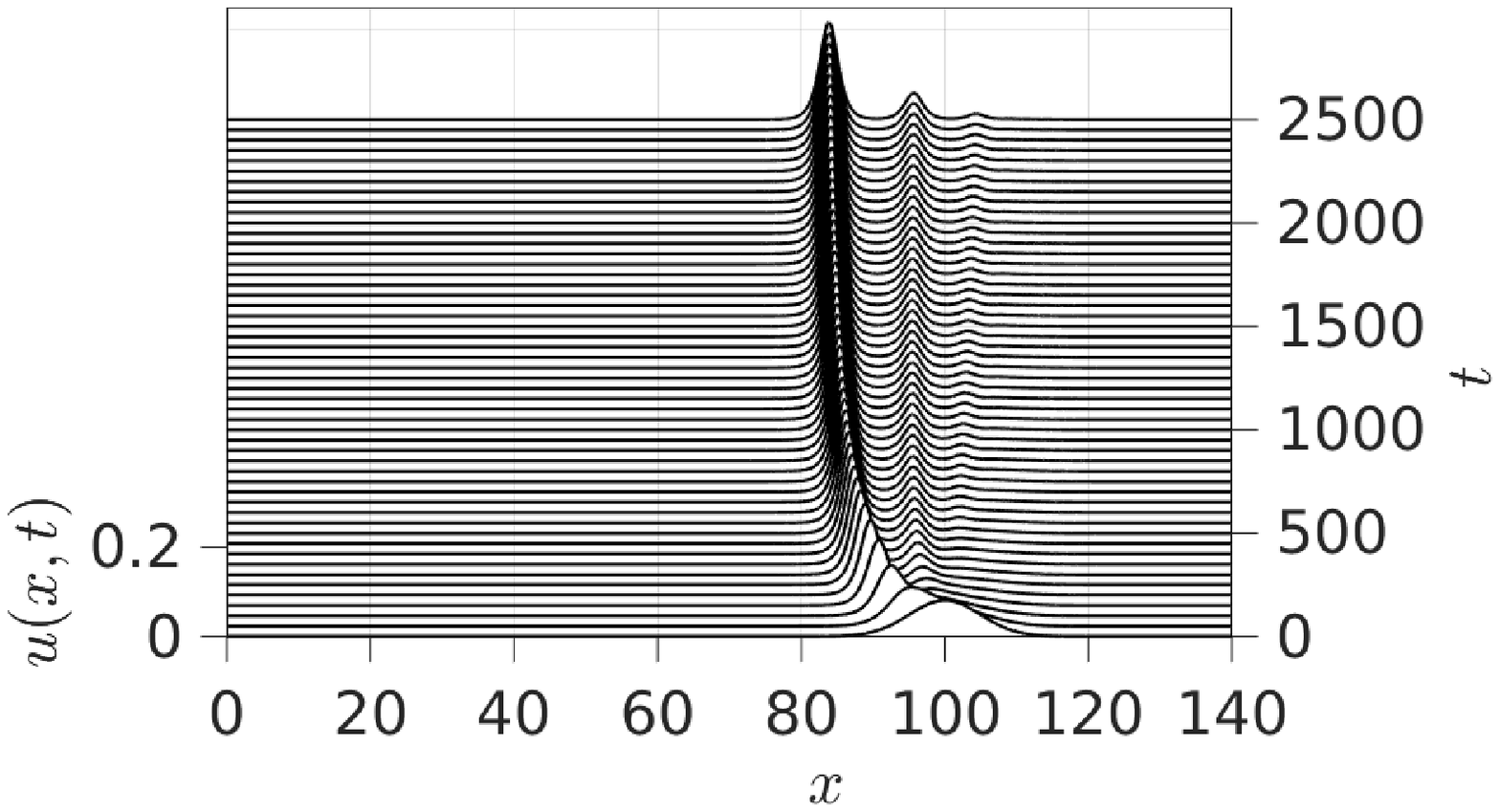}\\
\includegraphics[height=.23\textheight, angle =0]{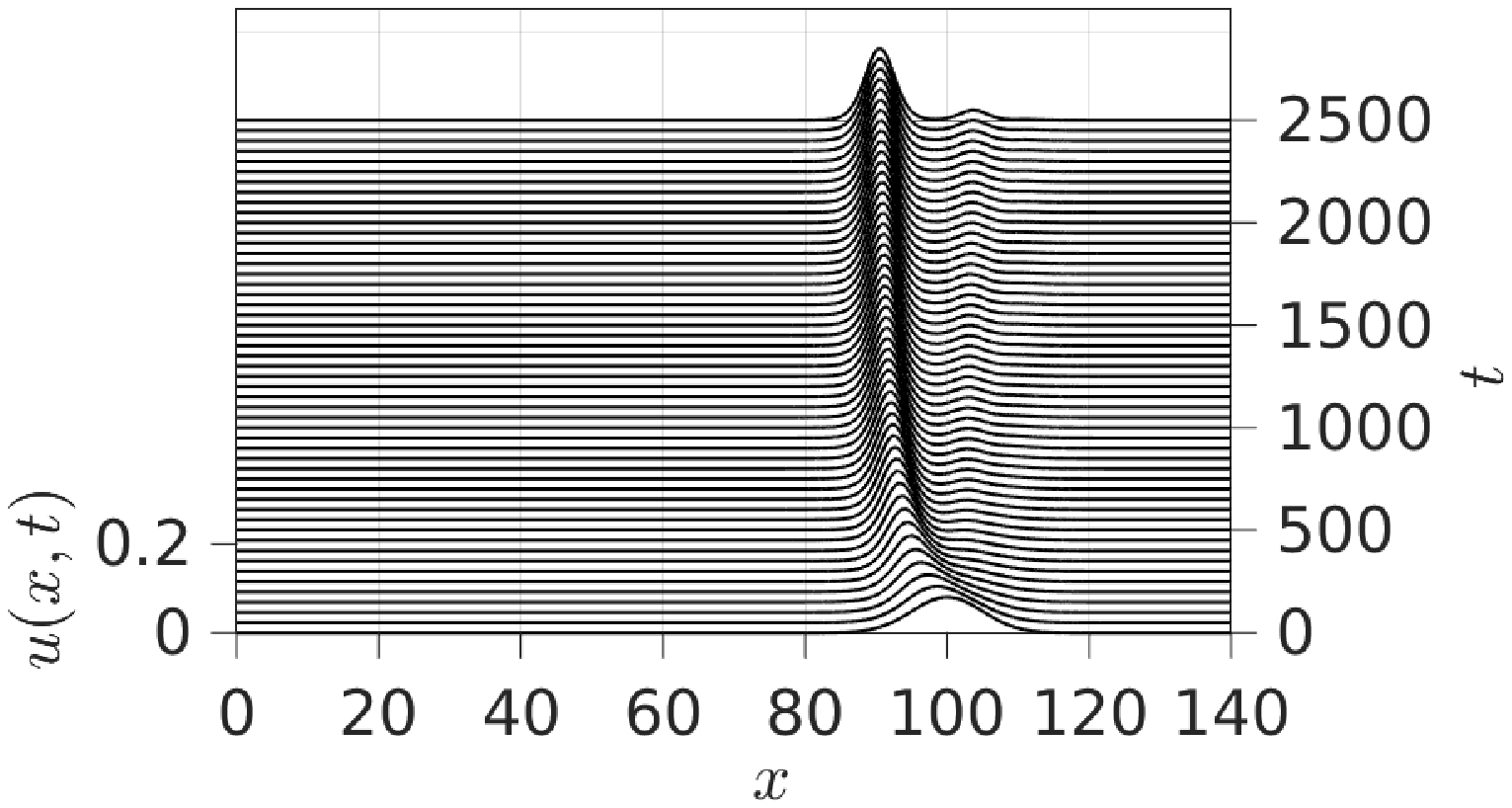}
\hskip -0.5cm
\includegraphics[height=.23\textheight, angle =0]{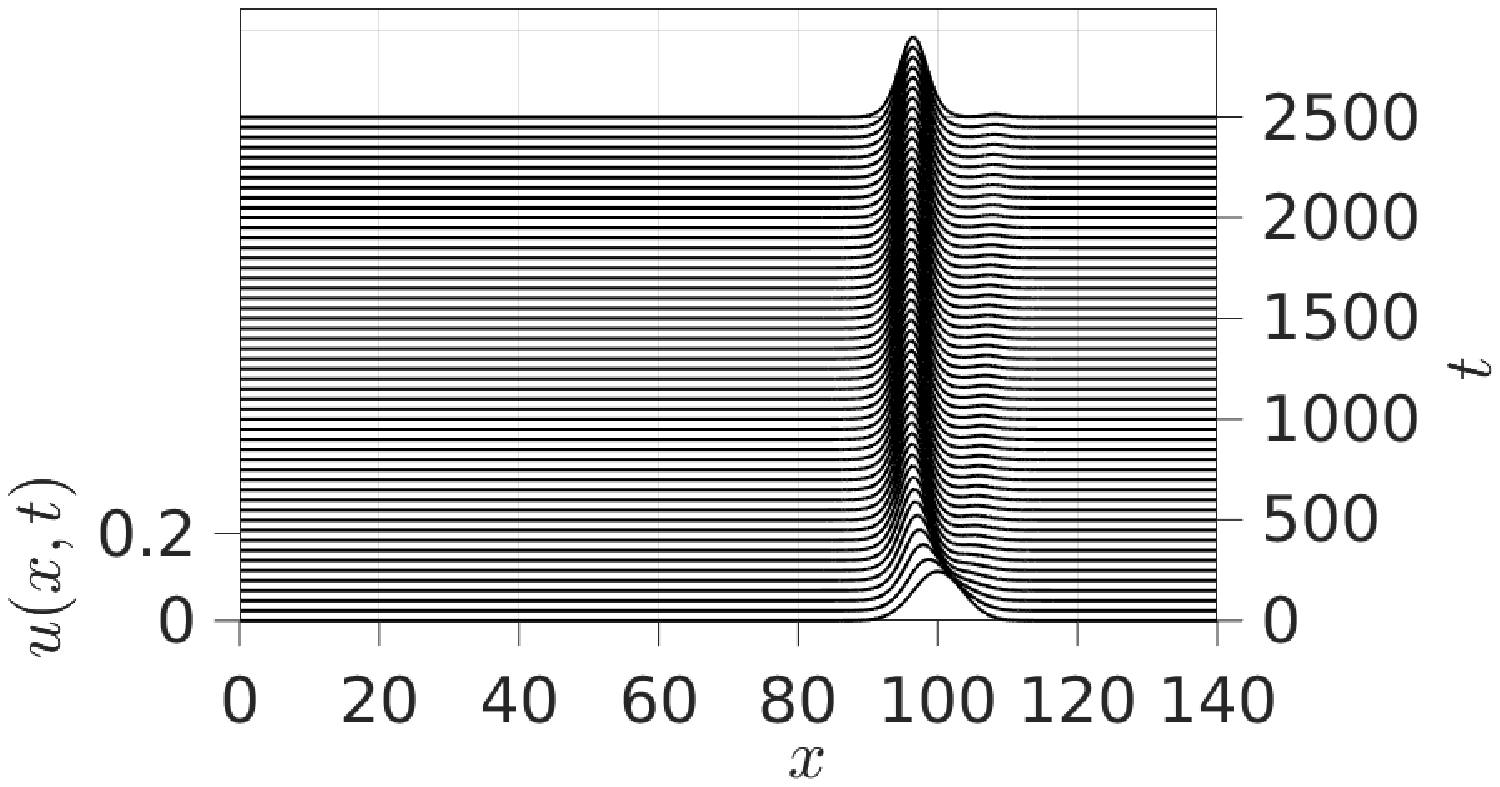}
\end{center}
\caption{
Same as Fig.~\ref{fig2} but for Gaussian initial data with $\sigma=7$ (and
$x_{0}=100$). Top left and right panels correspond to values of $b$ of $b=-3$ 
and $b=-2.5$ whereas the bottom left and right ones to values of $b$ of $b=-2$ 
and $b=-1.5$, respectively.
}
\label{fig3}
\end{figure}
\begin{figure}[pt!]
\begin{center}
\includegraphics[height=.21\textheight, angle =0]{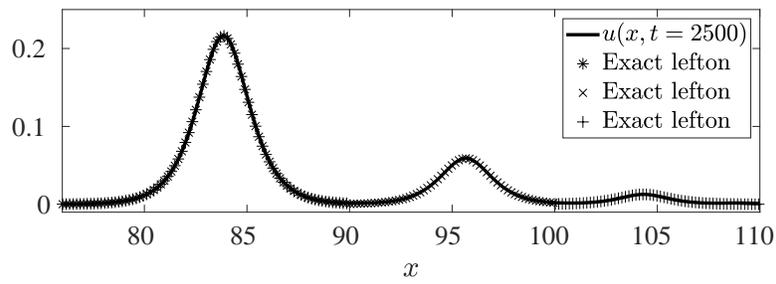}
\end{center}
\caption{
Spatial distribution of the solution of the top right panel 
of Fig.~\ref{fig3} at $t=2500$ (i.e., $b=-2.5$, $\sigma=7$ and 
$x_{0}=100$). The numerically obtained solution is shown with a
solid black line whereas the exact lefton solutions 
[cf. Eq.~\eqref{lefton}] are shown with black stars, crosses 
and plus signs, respectively.
}
\label{fig4}
\end{figure}
\begin{figure}[pt!]
\begin{center}
\includegraphics[height=.23\textheight, angle =0]{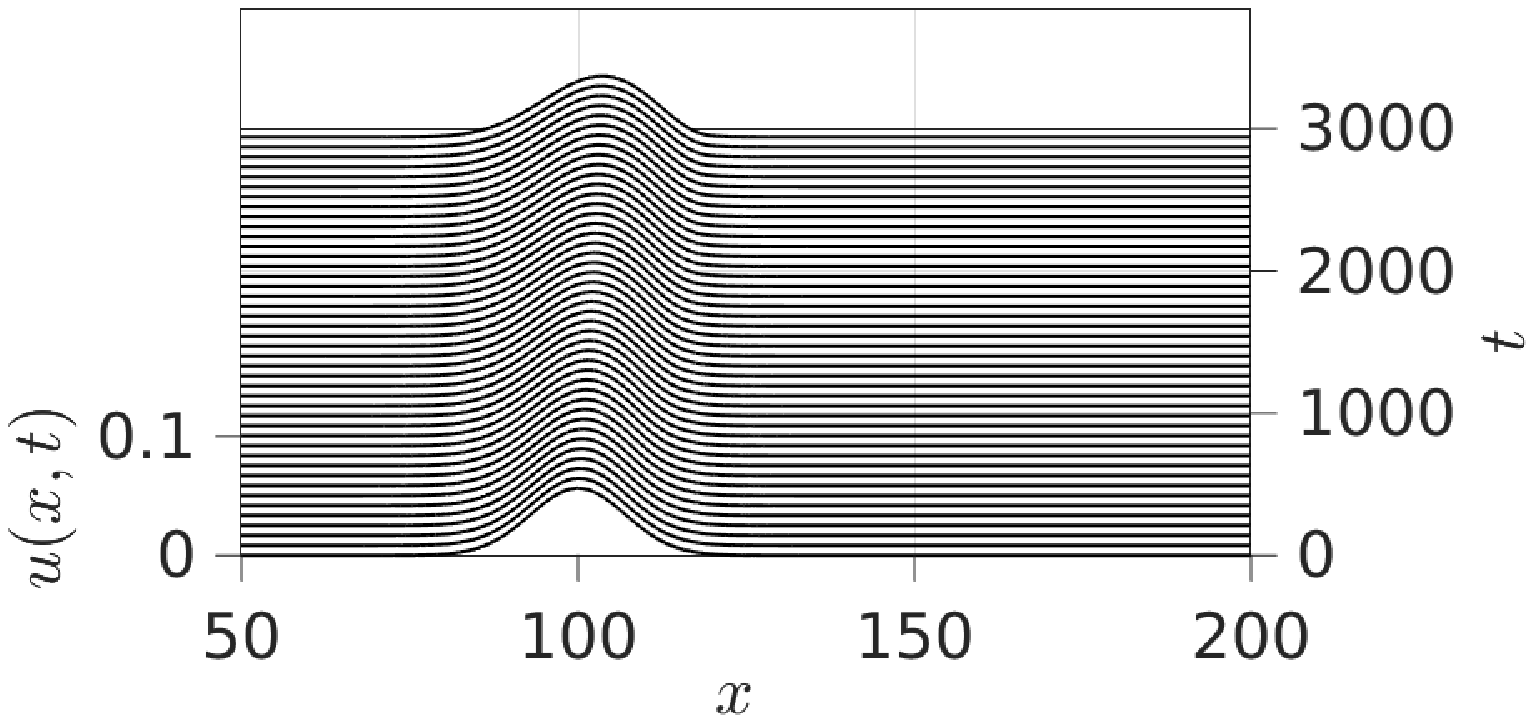}
\hskip -0.5cm
\includegraphics[height=.23\textheight, angle =0]{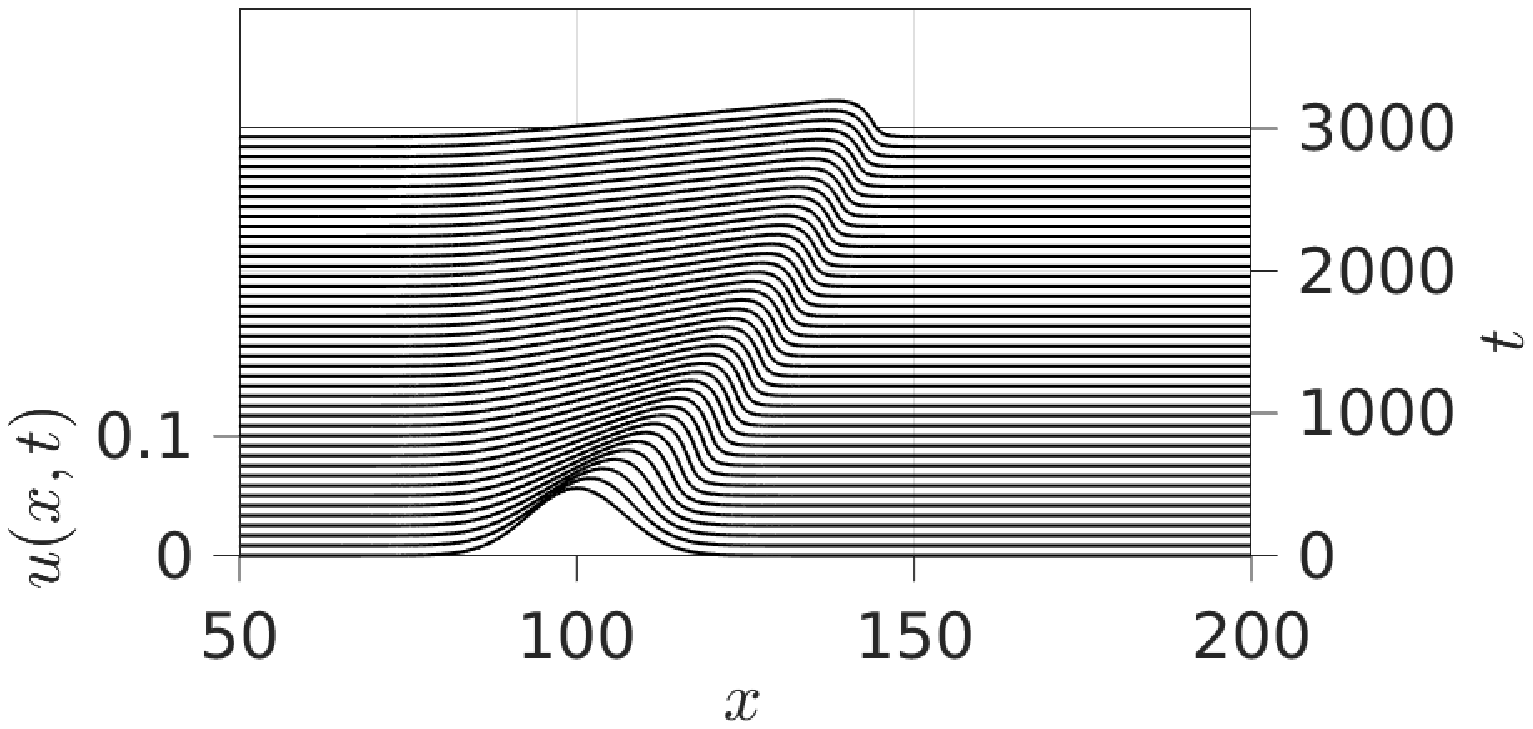}\\
\includegraphics[height=.23\textheight, angle =0]{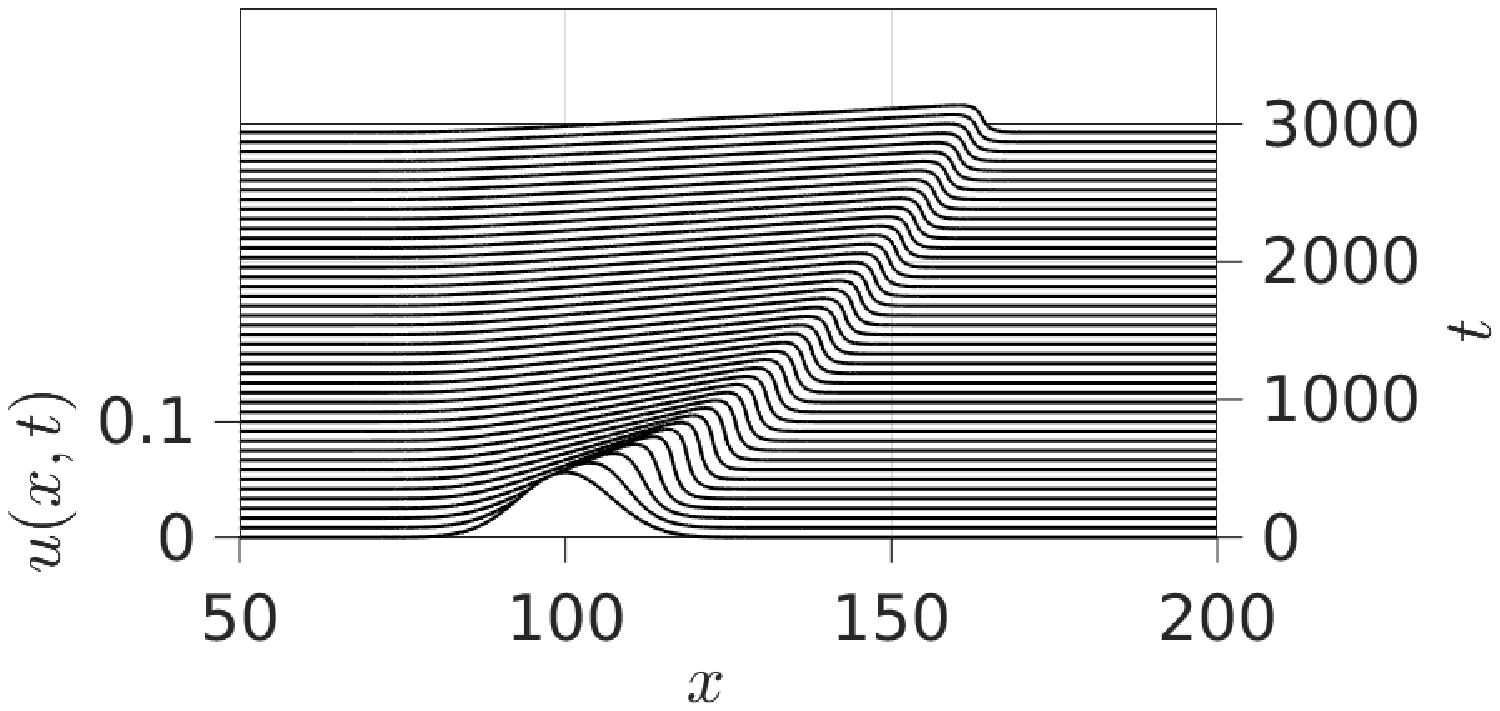}
\hskip -0.5cm
\includegraphics[height=.23\textheight, angle =0]{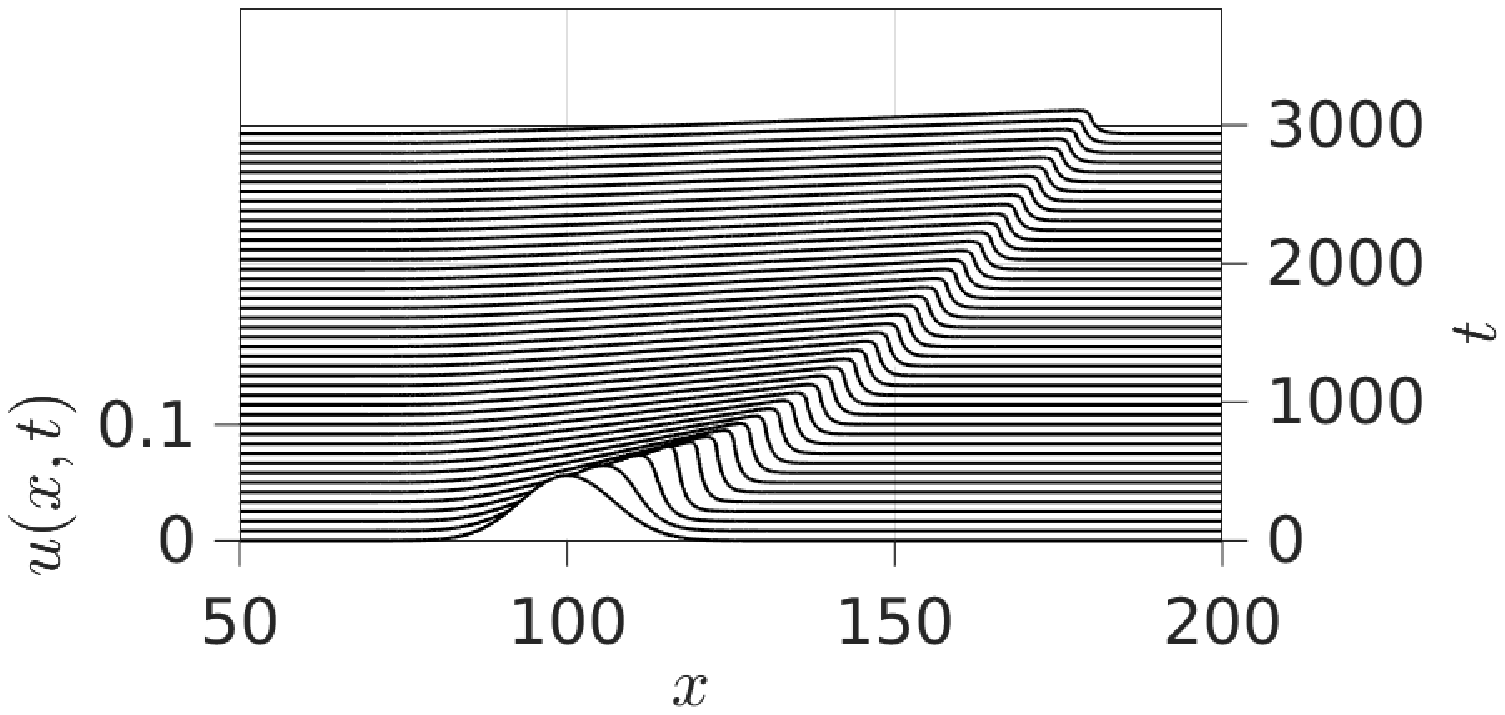}\\
\end{center}
\caption{
Same as Fig.~\ref{fig3} but for Gaussian initial data with $\sigma=10$ (and
$x_{0}=100$) and $N=16384$ Fourier modes. Top left and right panels correspond 
to values of $b$ of $b=-1$ and $b=-0.5$ whereas the bottom left and right ones 
to values of $b$ of $b=0$ and $b=0.5$, respectively. 
}
\label{fig5}
\end{figure}

Next, we focus on the regime $b\in(-1,1)$ in which ramp-cliff solutions
were suggested to be observed from Gaussian initial data. Fig.~\ref{fig5} 
corresponds to numerical results with $\sigma=10$ and $x_{0}=100$ by 
employing $N=16384$ Fourier modes. In particular, the top left and right
panels correspond to the spatio-temporal evolution of $u(x,t)$ with $b=-1$
(i.e., at the bifurcation point) and $b=-0.5$, whereas the bottom left and 
right ones to $b=0$ and $b=0.5$, respectively. From the top left panel of 
Fig.~\ref{fig5} ($b=-1$), it can be discerned that the Gaussian pulse becomes 
slightly wider but represents a nearly stationary solution (see, for example, 
Fig.~5  of~\cite{Holm03}). On the other hand, the top right, bottom left and 
right panels corresponding to $b=-0.5$, $b=0$ and $b=0.5$, respectively, showcase examples of ramp-cliff 
solutions. It should be noted that their amplitude decreases over the time 
evolution although their velocity increases with $b$. 

\bibliographystyle{unsrt}

\end{document}